\theoremstyle{plain}
\newtheorem{theorem}{Theorem}[section]
\newtheorem{rem}{Remark}[section]
\newtheorem{lemma}{Lemma}
\newtheorem{corollary}{Corollary}
\theoremstyle{definition}
\newcommand{\R}{\mathbb{R}}
\newcommand{\argmax}{\mathop{\mathrm{argmax}}}
\newcommand{\esp}{\mathbb{E}}
\newcommand{\prob}{\mathbb{P}}
\newcommand{\1}{\mathbf{1}}
\newcommand{\dd}{D}
\newcommand{\hausdorff}{\mathcal{H}}
\newcommand{\diam}{\Delta}
\newcommand{\In}{\interval{n}}
\newcommand{\cX}{\mathcal{X}}
\newcommand{\constA}{\mathfrak{a}}
\newcommand{\constB}{\mathfrak{b}}
\newcommand{\interval}[1]{{\{1,\dotsc,#1\}}}
\newcommand{\intervalb}[2]{{\{#1,\dotsc,#2\}}}
\newcommand{\gbar}{\begin{tikzpicture}[
	anchor=base, baseline,
	outer sep=0pt, 
	]
	\node[](A) {$\gamma$};
	\draw[shorten >=2pt, shorten <=3pt] ([yshift=-0.5pt]A.west) to ([yshift=-0.5pt]A.east);
	\end{tikzpicture}}
  \title{Statistical analysis of a hierarchical clustering algorithm with outliers}
  \author{%
  Nicolas Klutchnikoff%
  \thanks{%
  Univ Rennes, CNRS, IRMAR (Institut de Recherche Mathématique
  de Rennes) - UMR 6625, F-35000 Rennes, France
  }
  \and
  Audrey Poterie%
  \thanks{%
    Univ Bretagne Sud, CNRS, LMBA (Laboratoire de Mathématiques Bretagne Atlantique) - UMR 6205, F-56000 Vannes, France
  }
  \and
  Laurent Rouvière%
  \thanks{%
  Univ Rennes, CNRS, IRMAR (Institut de Recherche Mathématique
  de Rennes) - UMR 6625, F-35000 Rennes, France
  }}
  \date{}
\begin{document}
  \maketitle
  
  \hrule

  \begin{abstract}
    \footnotesize{\noindent
      It is well known that the classical single linkage algorithm usually fails to identify clusters in the presence of outliers. In this paper, we propose a new version of this algorithm, and we study its mathematical  performances. In particular, we establish an oracle type inequality which ensures that our procedure allows to recover the clusters with large probability under minimal assumptions on the distribution of the outliers. We deduce from this inequality the consistency and some rates of convergence of our algorithm for various situations. Performances of our approach is also assessed through simulation studies and a comparison with classical clustering algorithms on simulated data is also presented.
    
      \medskip

    \noindent
    {\bf Keywords:} Clustering, Outliers contamination, Single linkage.\\
    {\bf AMS Subject Classification:} 62G20, 62H30}
  \end{abstract}

  \hrule

\section{Introduction}

In unsupervised learning, clustering refers to a very broad set of tools which aim at finding a partition of the data into dissimilar groups so that the observations within each group are quite similar to each other. Considered as one of the most important questions in unsupervised learning, there is a vast literature on this paradigm. We refer the reader to~\cite{har75,jain1988algorithms,duda2012pattern} and references therein for a broad overview.
Moreover, many clustering methods have been developed and studied, such as the $k$-means algorithm~\citep{macqueen1967some}, the hierarchical clustering methods~\citep{johnson1967hierarchical}, the spectral clustering algorithms~\citep{ng2002spectral}, the model-based clustering approaches~\citep{mclachlan1988mixture} or density based methods as DBSCAN~\citep{ester1996density}. Clustering plays an important role in explanatory data analysis and has been used in many fields including pattern recognition~\citep{satish2006kernel}, image analysis~\citep{filipovych2011semi}, document retrieval, bioinformatics~\citep{yamanishi2004protein,zeng2012clustering} and data compression~\citep{gersho2012vector}. Overall, clustering tools are often used to help users understand the data structure. Furthermore, with the massive increase in the amount of collected and stored data, clustering methods can also be used as dimensionality reduction techniques~\citep{yengo:hal-00764927}.

In this paper, we consider a mathematical framework close to the one used in ~\cite{mahevo09,arias2011spectral,auklro15}. The data are generated according to a mixture of several distributions whose supports are assumed to be disjoint in order to identify the groups. Furthermore, we assume that the data are contaminated by outliers, that is observations that do not belong to any of the supports. Many authors have studied theoretical properties of nearest neighbor graphs, hierarchical and spectral clustering algorithms in a similar context. For instance,~\cite{mahevo09} provides an in-depth analysis of $k$-nearest neighbor graphs while~\cite{arias2011clustering} studies the performance of spectral clustering algorithms and single linkage algorithms under assumptions on both the distances between supports and the number of outliers.

Single linkage algorithm is a hierarchical method which consists of recursively merging the two closest clusters in terms of minimal distance. Although this procedure has many interesting properties, it is well known that its performance is much lower  in the presence of outliers. This issue comes from two different phenomena. On the one hand, the procedure may wrongly detect small clusters among the outliers. On the other hand, during the recursive clustering procedure, a chain of outliers may lead to merging two groups which contain observations that belong to two different supports. To overcome these problems, we propose an automatic procedure based on a new analysis of the dendrogram produced by the hierarchical agglomerative clustering in terms of minimal distance. This new procedure can be viewed as a simple modification of the classical single linkage algorithm adapted to the presence of outliers. Moreover, the proposed method allows the detection of clusters with low-dimensional geometrical structures. This last property, shared with spectral clustering, is of primary interest in several modern applications, see~\cite{arias2011clustering} for more details. Like spectral clustering, our data-driven procedure only requires knowledge of the number of groups to identify the clusters with high probability (under mild assumptions on the size of the clusters). Furthermore, our approach offers a decisive advantage over spectral clustering in terms of time complexity.

The paper is organized as follows. In Section~\ref{sec:framework}, we introduce the mathematical framework, the model assumptions, and we define a criterion, called {\it clustering risk}, to measure performance of clustering algorithms. Section~\ref{sec:aggloclust} describes the single linkage clustering algorithm and shows, through simple examples, that this algorithm often fails to recover true clusters in the presence of outliers. We then build a new variant of this algorithm which takes into account the possible presence of outliers in the data. This new procedure does not require, besides the number of groups desired, the calibration of any additional parameter. In section~\ref{sec:main_results}, we prove the efficiency of our procedure by exhibiting an oracle-type inequality. Some consistency results and rates of convergence are then deduced from this inequality.  Section~\ref{sec:simus} is devoted to compare the performance of our method with other classical clustering algorithms through several synthetic datasets.  The proofs are gathered at the end of the paper, in Section~\ref{sec:proofs}. The proposed clustering method has been implemented in \textsf{R} and the source code is available at~\url{https://github.com/klutchnikoff/outliersSL}.

\section{Mathematical framework}\label{sec:framework}

In this section, we define a general probabilistic model to generate data which locally belong to low-dimensional structures $S_{\!1},\dotsc,S_{\!M}$ and which possibly contain outliers collected in a set $S_{0}$. We also specify what we expect from a clustering procedure in this framework, and we define a risk to quantify the performance of such a procedure.

\subsection{Generative model}\label{sec:model}

We specify in this section how the data are generated in $S_{\!1},\dotsc,S_{\!M}$ and how the outliers are sampled outside these structures.
We are given $n$ independent $[0,1]^{\dd}$-valued random variables $X_{1},\dotsc,X_{n}$, and we assume that their common distribution $\prob$  can be written as a mixture of $M+1$ distributions $\prob_{0},\dotsc,\prob_{M}$.
More precisely, for $0\leq \varepsilon<1$ and  a vector of convex weights $(\gamma_1,\dotsc,\gamma_M)$, $\prob$ can be decomposed as follows:
\begin{equation}\label{eq:def_P}
  \prob = \varepsilon \prob_{0} + (1-\varepsilon) \sum_{i=1}^{M}
  \gamma_{i} \prob_{i}.
\end{equation}
The value $\varepsilon$ represents the proportion of outliers contained in the data while $\prob_{0}$ stands for the distribution of these outliers.
The second term of the right-hand side of this equation is, up to the factor $1-\varepsilon$, the distribution of the \emph{actual data} or \emph{non-outlier data}. These actual data are distributed into $M$ disjoint groups: $\gamma_i$ represents
the weight of the $i$-th group and $\prob_i$ denotes its distribution. For any $i\in\intervalb{1}{M}$, let $S_{i} = \operatorname{supp}(\prob_i)$
be the compact set of all points $x\in[0,1]^\dd$ for which any neighborhood $A$ of $x$ satisfies $\prob_i(A)>0$. We also define the set
\begin{equation*}
  S_{0} = [0,1]^\dd \setminus \left(\bigcup_{i=1}^M S_{i}\right),
\end{equation*}
and we assume that $\operatorname{supp}(\prob_0)\subseteq \operatorname{adh}(S_{0})$ and $\prob_0(\operatorname{adh}(S_{0})\cap S_{i})=0$ for $i\in\interval{M}$. Here $\operatorname{adh}(A)$ stands for the adherence of a set $A$. These assumptions on $\prob_0$ mean in particular that outliers are defined as observations that do not belong to the supports of the $M$ groups.

\subsection{Assumptions}
\label{sec:assumptions}
Clusters are usually identified by high-density regions separated by low-density regions. For instance,~\cite{har75} defines clusters
as connected components of the level sets of the density of the observations. Moreover, the geometry of a cluster often corresponds to low-dimensional structures such as submanifolds of $[0,1]^D$ \citep[see for example][]{arias2011clustering,arias2011spectral}.
We consider a similar framework and detail the assumptions of our model below.

\begin{description}
  \item[(A1)] For each $i\in\intervalb1{M}$,  the set $S_{i}$ is connected. Moreover,
        \[
          \delta=\min_{1\leq i<j\leq M}
          \min\{\|x-y\|:
          (x,y)\in S_{i}\times S_{\!j}
          \} > 0,
        \]
        where $\|\cdot\|$ stands for the Euclidean norm.
\end{description}

Assumption \textbf{(A1)} ensures that the  supports are disjoint and well separated. This implies that the model is identifiable since the decomposition~\eqref{eq:def_P} of $\prob$ is then unique, up to any permutation of the indexes $\{1,\dotsc,M\}$. Note also that, under this assumption, the dataset $\mathbb{X}_n = \{X_1,\dotsc, X_n\}$ is split into $M+1$ well-defined \emph{groups}: $\mathbb{X}_n\cap S_{0}$ corresponds to the outliers whereas for $i\geq 1$, $\mathbb{X}_n\cap S_{i}$ correspond to the \emph{true clusters} we want to recover.

Throughout the paper, for any $0\leq s \leq \dd$, we denote by $\hausdorff^s$ the $s$-dimensional Hausdorff outer measure. We recall that, if $s$ is an integer, this measure agrees with ordinary ``$s$--dimensional surface area'' on regular sets. In particular, $\hausdorff^\dd$ is the standard Lebesgue measure on the ambient space $\R^\dd$. We refer the reader to~\citet{evans2015measure} for more details on this topic. We also define:
\begin{equation*}
  s_{i}=\operatorname{dim}_{H}(S_{i})
  \quad\text{and}\quad
  d = \max\{s_i : i\in\interval{M}\},
\end{equation*}
where $\operatorname{dim}_{H}(S_{i})$ denotes the Hausdorff dimension of the set $S_{i}$, that is the unique real number $s\in[0,\dd]$ such that $\hausdorff^t(S_{i}) = \infty$ if $t<s$ and $\hausdorff^t(S_{i}) = 0$ if $t>s$. Notice that if $S_{i}$ is a submanifold of $\R^\dd$, its Hausdorff dimension $s_i$ corresponds to its classical dimension.

\begin{description}
  \item[(A2)] There exists $\kappa_0 >0$ such that, for any $A\subseteq S_{0}$, we have $\prob_{0}(A)\le \kappa_0 \hausdorff^{\dd}(A)$.
\end{description}

This assumption relates to the distribution of the outliers and can be reformulated as follows: $\prob_{0}$ is absolutely continuous with respect to $\hausdorff^{\dd}$, with bounded Radon-Nikodym derivative.
This implies, in some sense, that the outliers  are nowhere dense in $S_{0}$ and thus prevents having clusters that correspond to groups of outliers.

\begin{description}
  \item[(A3)]  For any $i\in\interval{M}$, there exists $\kappa_i >0$ such that, for any $A\subseteq S_{i}$, we have $\prob_{i}(A)\ge \kappa_i^{-1} \hausdorff^{s_{i}}(A)$.
\end{description}

Assumption~\textbf{(A3)} relates to the distribution of \emph{actual data} and ensures that each $\prob_{i}$ is quite dense on $S_{i}$. Note in particular that $\prob_{i}$ can be singular with respect to  $\hausdorff^{s_i}$ ($\prob_i$ is singular with respect to $\hausdorff^\dd$ as soon as $s_i<\dd$).

Assumptions~\textbf{(A1)},~\textbf{(A2)} and~\textbf{(A3)} are classical in the clustering setting. The first one guarantees identifiability of the model while the other two  highlight differences between outliers and actual data:  the former are diffused while the latter are densely distributed into their supports.

Geometric assumptions on the supports $S_{i}$ are also needed. To state them, we denote by $B(x,r)$  the Euclidean ball centered at $x\in\R^\dd$ with radius $r>0$ and by $\Gamma$ the usual gamma function.
Recall that, for any $s>0$, the function $\eta(s)=\pi^{s/2}\Gamma^{-1}(1+s/2)$ generalizes to non-integer parameters the volume of the unit ball in dimension $s$.

\begin{description}
  \item[(A4)] There exists $\kappa_c\geq 1$  such that, for any $i\in\interval{M}$, $x\in S_{i}$ and $0<r\le \Delta_{i} = \operatorname{diam}(S_{i})$,
        \begin{equation*}
          \label{eq:regularity-Si}
          \kappa_c^{-1}\le\frac{\hausdorff^{s_{i}}(S_{i}\cap B(x,
          r))}{\eta(s_{i}) r^{s_{i}}}\le\kappa_c.
        \end{equation*}
        Here $\operatorname{diam}(S_{i})=\max\{\|x-y\|:x\in S_{i},y\in S_{i}\}$ denotes the diameter of the support $S_{i}$.
\end{description}

Assumption \textbf{(A4)} prevents the sets $S_{i}$ from being ``too narrow'' in some places. A similar assumption is made in~\citet{arias2011clustering}. Note also that, if $S_{i}$ is a submanifold that satisfies a \emph{reach} condition, then \textbf{(A4)} is automatically fulfilled~\citep[see][and references therein]{MR2320821}.

\begin{description}
  \item[(A5)] For any $i\in\intervalb{1}{M}$, the Hausdorff dimension $s_i$ of $S_{i}$ agrees with its Minkowski-Bouligand dimension, that is:
        \begin{equation*}
          s_i = \lim_{r\to0} \frac{\log(N_r(S_{i}))}{\log(1/r)},
        \end{equation*}
        where $N_r(S_{i})$ denotes the minimal number of open balls of radius $r$ required to cover $S_{i}$.
\end{description}

Assumption~\textbf{(A5)} is necessary to obtain  sharp bounds on the covering numbers $N_r(S_{i}),r>0$ for any $i\in\intervalb{1}{M}$. Indeed, in general, we only have
\begin{equation*}
  s_i
  \leq \liminf_{r\to0} \frac{\log(N_r(S_{i}))}{\log(1/r)}
  \leq \limsup_{r\to0} \frac{\log(N_r(S_{i}))}{\log(1/r)}
  \leq \dd.
\end{equation*}
Here we assume that the limit inferior matches with the limit superior and that these limits equal $s_i$. This technical assumption is not too restrictive.
We offer two simple generic examples. First, if $S_{i}$ is a submanifold of $\R^\dd$ then \textbf{(A5)} holds. Indeed, in this case, the Hausdorff dimension and the Minkowski-Bouligand dimension both match with the usual dimension of $S_{i}$. Next \textbf{(A5)}  is also satisfied if $S_{i}$ is a self-similar set. Indeed, using Assumption~\textbf{(A4)} with $r=\diam_i$, we obtain that
$0<\hausdorff^{s_{i}}(S_{i})<+\infty$. This implies that $S_{i}$ satisfies the \emph{open set condition} which allows us to conclude that both the Hausdorff and the Minkowski-Boulingand dimensions match with the affinity dimension of the self-similar set $S_{i}$~\citep[see][chapter~9]{MR3236784}.

\begin{description}
  \item[(A6)] Let $\gamma_*=\min\{\gamma_i : i\in\interval{M}\}$, $\gamma^*=\max\{\gamma_i : i\in\interval{M}\}$ and $\gbar=\gamma_*-\gamma^*/2$. We assume that:
        \begin{equation*}
          \gamma^*< 2\gamma_*
          \quad\text{and}\quad
          0\leq\varepsilon<\gbar/(1+\gbar).
        \end{equation*}
\end{description}
This assumption allows differentiating actual clusters from the set of outliers.
It implies that the sizes of the actual clusters should be of the same order since the largest cluster cannot be twice as large as the smallest one. The number of allowed outliers is constrained by
the difference in size of the groups. The more homogeneous the sizes of the groups are, the higher the proportion of outliers can be. For instance, $\varepsilon$  must be equal to 0 when the largest cluster is twice as large as the smallest one, {\it i.e.} when $\gamma^*= 2\gamma_*$. This proportion could increase when the gap between cluster sizes reduces. In particular, it could reach  $1/(2M+1)$ when $\gamma^*=\gamma_*=1/M$.

\subsection{Clustering risk}

We aim at finding a clustering procedure that groups together the data that lie within the same set $S_{i}$, for each $i\in\interval{M}$.  Regarding the outliers, they can be assigned to any other group or collected into a specific group by the procedure. A clustering procedure consists of splitting the data $\mathbb{X}_n$ into $M$ disjoint clusters. In other words, a clustering algorithm provides a family of clusters $\cX = \{\cX_1,\dotsc,\cX_M\}$ such that, for any $1\leq i\neq j\leq M$,
\begin{equation}
  \cX_i\neq\emptyset,
  \qquad
  \cX_i\cap\cX_j=\emptyset,
  \qquad\text{and}\qquad
  \bigcup_{i=1}^M \cX_i\subseteq \mathbb{X}_n.
\end{equation}

Observe that the family $\cX$ may not cover the whole set $\mathbb{X}_n$. It could be the case if the algorithm reveals some outliers
that are not assigned to any cluster. In our context, a clustering procedure is efficient if each cluster contains {all the} observations from (only) one of the supports $S_{i}$, $i\in\interval{M}$. It means that
there exists a unique {permutation} $\pi\in\Pi_m$ {from the} set of all permutations of $\interval{M}$, such that, for any $i\in\interval{M}$, the data $\mathbb{X}_n \cap S_{i}$ are included into $\cX_{\pi(i)}$.
In this context, we measure the performance of a clustering procedure by the {\it clustering risk} defined as
\begin{equation}
  \label{eq:defclustrisk}
  \mathcal R_{n}(\cX) =  \prob(\forall\pi\in\Pi_M, \exists i\in\interval{M}, \; \mathbb{X}_n \cap S_{i} \nsubseteq \cX_{\pi(i)}),
\end{equation}
where $\cX = \{\cX_{1},\dotsc,\cX_{M}\}$ is the clustering family selected by the clustering procedure. This quantity is the probability that the clustering procedure does not correctly recover one subset
of observations from at least one of the $S_{i}$'s. The smaller the risk, the better the clustering procedure.

\section{Single linkage algorithm for outliers}
\label{sec:aggloclust}
Many clustering algorithms have been studied in a context similar to our setting. For instance,~\cite{mahevo09},~\cite{arias2011clustering} and~\cite{arias2011spectral}
prove that algorithms based on pairwise distances ($k$-nearest neighbor graph, spectral clustering...) are efficient
as soon as the supports $S_{i},i=1,\dotsc,M$ are sufficiently separated. The single linkage hierarchical clustering algorithm has
also been investigated by~\cite{arias2011clustering} and~\cite{auklro15}. However, it is well known that this algorithm is sensitive to outliers.
We propose here to address the weaknesses of this algorithm in the presence of outliers.

\subsection{Agglomerative clustering with single linkage}

Many hierarchical clustering algorithms rely on the notion of $r$--connected set of points in $[0,1]^\dd$, where $r$ is a nonnegative real number. A subset $A\subseteq \R[0,1]^\dd$ is said to be $r$--connected, if
\begin{equation*}
  B(A,r/2)= \bigcup_{a\in A} B(a,r/2)
\end{equation*}
is a connected set, from a topological point of view. In particular $A=\{x,y\}$ is $r$--connected if $\|x-y\|\leq r$.
The single linkage algorithm may be defined with this notion of connected set of points.
For any $r\geq0$, the set $B(\mathbb{X}_n,r/2)$ can be expanded into $M(r)\in\{1,\dotsc,n\}$ connected components denoted by $B_m(r)$ for $m\in\interval{M(r)}$. These connected components provide a partition of $\mathbb{X}_n$ into $M(r)$ clusters defined, for any $m\in\interval{M(r)}$, by
\begin{equation*}
  \mathcal{Y}_m(r) =  B_m(r) \cap \mathbb{X}_n.
\end{equation*}
The family  $\mathcal{Y}(r)= \lbrace \mathcal{Y}_m(r) \colon m\in\interval{M(r)}\rbrace$ provides clusters of the single linkage algorithm with radius $r$.

We can observe that the number of possible families $\mathcal{Y}(r)$ is finite when we let $r$ move in $\R^+$.
Indeed, as $r$ increases the clustering process consists of recursively merging the clusters. To see that, consider the single linkage distance between  two $r$-connected components $\mathcal{Y}_m(r)$ and  $\mathcal{Y}_{m'}(r)$.
It  is defined as the distance between the two closest members between these components
\begin{equation*}
  \operatorname{dist}(\mathcal{Y}_m(r),\mathcal{Y}_{m'}(r) )= \inf \lbrace \|X_k- X_l\| \colon  X_k \in \mathcal{Y}_m(r), \, X_l \in \mathcal{Y}_{m'}(r)\rbrace.
\end{equation*}
At the beginning, for $r=\rho_0=0$ we have a first family
\begin{equation*}
  \mathcal{Y}(\rho_0) = \lbrace \mathcal{Y}_m(\rho_0), \, m\in\interval{M(\rho_0)}\rbrace.
\end{equation*}
Observe that if $X_i\neq X_j$ for all $1\leq i\neq j\leq n$ then $M(\rho_0)=n$ and each cluster $\mathcal Y_m(\rho_0)$
corresponds with one observation. Next the two closest clusters are merged according to the (smallest) distance $\operatorname{dist}(\cdot)$.
Denote by $\rho_1>0$ the distance between the two closest clusters in $\mathcal Y(\rho_0)$, we obtain the second family
\begin{equation*}
  \mathcal{Y}(\rho_1)= \{ \mathcal{Y}_m(\rho_1), m\in\interval{M(\rho_1)}\}.
\end{equation*}
This process is then recursively repeated until all (distinct) observations belong to a single cluster.
We denote by $K$ the (random) number of iterations, observe that $K\leq n-1$ almost surely.

\begin{rem}
  Let us make some general remarks about this procedure.
  At every step $k$ with $1\leq k \leq K$, the new selected radius $\rho_k$ is larger than the previous one: $\rho_k > \rho_{k-1}$.
  This radius corresponds to the distance between the two closest clusters belonging to $\mathcal{Y}(\rho_{k-1})$. Moreover, for any $\rho \in [\rho_k,\rho_{k+1}[$ with $0\leq k \leq K-1$ and $\rho_K=\infty$, we have
  \begin{equation*}
    \mathcal{Y}(\rho) = \mathcal{Y}(\rho_k).
  \end{equation*}
\end{rem}

At the end of the process, we obtain a sequence $\mathcal Y(\rho_0),\dotsc,\mathcal Y(\rho_{K-1})$ of partitions of the data. The aim is to determine how to choose one partition in this sequence. In other words, we have to select a radius in the sequence $\rho_0,\dotsc,\rho_{K-1}$.
Since the number of clusters is known, a natural way is to choose the radius such that
the associated number of clusters is close to $M$. More precisely, it is usually chosen such that
\begin{equation*}
  \widehat\rho_{n,SL} \in  \argmax_{\rho\in\{\rho_k \colon k\in\intervalb0{K-1}\}} \{M(\rho) \geq M\}.
\end{equation*}
Observe that $ \widehat\rho_{n,SL}$ exists as soon as each support $S_{i}$ contains at least one observation.
This algorithm is known to be consistent without outliers (i.e. if $\varepsilon=0$) and under assumptions close to ours~\citep{arias2011clustering,auklro15}.

\subsection{Dealing with outliers}\label{sec:SL}

In the presence of outliers, clusters in $\mathcal Y(\widehat\rho_{n,SL})$ may fail to recover supports $S_{i}$ with high probability.
We provide two toy examples to show that.

\paragraph{Example 1}
Figure~\ref{fig:contre_ex_SL}
displays clusters obtained by the classical single linkage algorithm on 2 datasets. The first one (\textsf{data1}) contains two groups and these
two groups are perfectly identified by the algorithm. For the second one (\textsf{data2}), one outlier has been added. We observe
that this single outlier defines one group while all the other observations are in the second group. Here, the performance of the classical single linkage is dramatically affected by this outlier.

\begin{figure}[H]
  \centering
  \includegraphics{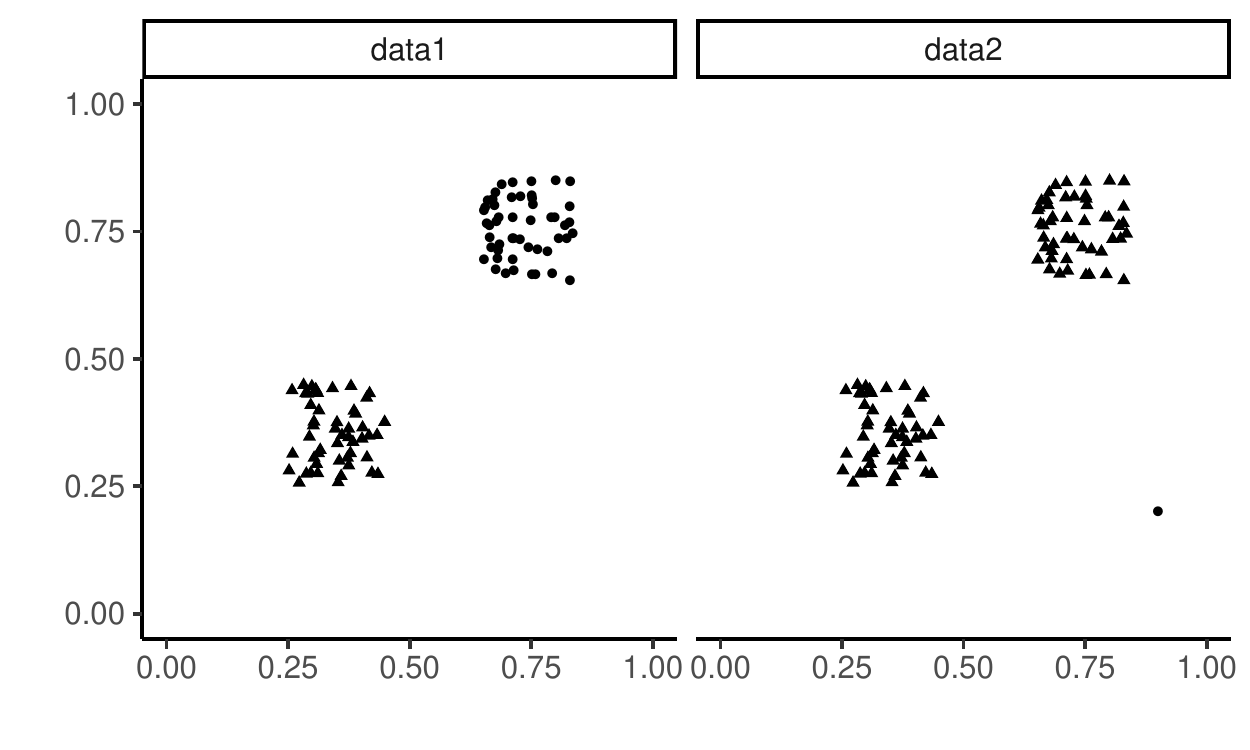}
  \caption{Results of classical single linkage algorithm performed on 2 datasets.}
  \label{fig:contre_ex_SL}
\end{figure}

\paragraph{Example 2} We consider data generated according to the following univariate distribution :
\begin{equation*}
  \prob = \frac{1-\varepsilon}2(\prob_1+\prob_2) + \varepsilon \prob_0,
\end{equation*}
where $\prob_1=\delta_{-1}$, $\prob_2=\delta_1$, $\prob_0=\mathcal U([-3,3])$ and $\varepsilon>0$. For $\varepsilon$ small enough, it is easily seen that
assumptions presented in section~\ref{sec:assumptions} are satisfied. However, simple calculations show that the single linkage procedure fails. Indeed, for any $n >2$,
\begin{equation*}
  \mathcal R_{n}(\mathcal Y(\widehat\rho_{n,SL}))\geq \frac23-\frac83 \frac1{(n+1)\varepsilon}.
\end{equation*}
As $n$ increases, the clustering risk tends to 2/3.

\subsection{OSL algorithm}

Results in the previous section show that the single linkage procedure is generally not efficient in the presence of outliers. To address this issue,~\cite{arias2011clustering} considers a modified version of the procedure that requires the
knowledge of the minimal separation distance $\delta$  defined in Assumption~\textbf{(A1)}. Moreover, to prove some consistency results, it is also assumed that the minimal distance between the outliers and the actual data is bounded below by $\delta$. Here, we adopt a different strategy that, from our point of view, seems both more realistic and reasonnable: we let the user choose the number of groups rather than the parameter $\delta$.
With this in mind, we propose to take the cardinality of the $M$-{th} largest clusters into account in our procedure. More precisely, our method consists of selecting the radius of the single linkage algorithm which maximizes the size of the $M$-{th} largest cluster.
In the following, we describe the procedure.

Recall that for any fixed radius $r>0$, the agglomerative clustering, presented  at the beginning of Section~\ref{sec:aggloclust}, provides $M(r)$ clusters
\begin{equation*}
  \mathcal Y(r)=\{\mathcal Y_m(r),m\in\interval{M(r)}\}.
\end{equation*}
With no loss of generality, we reorder the indices of the $r$-connected components in $\mathcal Y(r)$ such that
\begin{equation*}
  |\mathcal Y_1(r)| > |\mathcal Y_2(r)| > \dotsc> |\mathcal Y_{M(r)}(r)|.
\end{equation*}
In the event of a tie, {\it i.e.} if $|\mathcal Y_m(r)| = |\mathcal Y_{m'}(r)|$, several rules can be applied to break them. We use the following convention: $\mathcal Y_m(r)$ is declared ``bigger'' than $\mathcal Y_{m'}(r)$ if
\begin{equation*}
  \min\{k\in\{1,\dotsc,n\}:X_k\in \mathcal Y_m(r)\}<\min\{k\in\{1,\dotsc,n\}:X_k\in \mathcal Y_{m'}(r)\}.
\end{equation*}
This means that tie breaking is done by the smallest index in the cluster.

Our robust single linkage clustering procedure proposes to consider only the $M$ (which is assumed to be known) largest clusters and to merge the other clusters together. Formally, for a fixed value of $r>0$ we consider the $M$ clusters
\begin{equation}
  \label{eq:defmathcX}
  \cX_{1}(r)=\mathcal Y_{1}(r),\dotsc,\cX_{M}(r)=\mathcal Y_{M}(r).
\end{equation}
If $M(r)<M$, we define $\cX_m(r)=\emptyset$, for any $m\in\intervalb{M(r)+1}{M}$. Otherwise, when $M(r)> M$, the observations that belong to
\begin{equation*}
  \cX_{0}(r)= \bigcup_{m=M+1}^{M(r)}\mathcal Y_{m}(r)
\end{equation*}
are not assigned to any group $\cX_m(r)$, $m\in\interval{M}$. For a fixed value of $r>0$, this procedure provides the family of clusters $\cX(r)=\{\cX_1(r),\dotsc,\cX_M(r)\}$.

To propose a data-dependent choice of the radius $r$, a few remarks are necessary. Too small values of $r$  may result in large values of $M(r)$.
In this case, the $M$ largest clusters could be too small and the clustering procedure may fail to recover all the supports $S_{\!1},\dotsc,S_{\!M}$.
On the opposite, too large values of $r$ may increase both the risk to gather observations from different supports in the same cluster, and the risk to obtain clusters defined by outliers.
The algorithm's performance then depends greatly on the choice of the radius $r$. With this in mind, we select the radius in $\{\rho_k \colon k\in\intervalb0{K-1}\}$ which maximizes the size of the $M$-th cluster:
\begin{equation}
  \label{eq:choixharr}
  \widehat r_n = \max \argmax_{\rho\in\{\rho_k \colon k\in\intervalb0{K-1}\}} |\cX_M(\rho)|.
\end{equation}

\begin{rem}
  The main difference compared to the single linkage clustering is that this algorithm selects the partition which maximizes the size of the $M$-{th} cluster. Observations that belong to $\cX_{0}(\widehat r_n)$ are not assigned to any group and might be considered as outliers.
\end{rem}

\section{Main results}\label{sec:main_results}

The selection procedure~\eqref{eq:choixharr} defines the family of clusters $\cX(\widehat r_n)=\{\cX_m(\widehat r_n),m\in\intervalb1M\}$ whose clustering risk is given by:
\begin{equation*}
  \mathcal R_n(\cX(\widehat r_n))=\prob(\forall\pi\in\Pi_M, \exists i\in\interval{M}, \; \mathbb{X}_n \cap S_{i} \nsubseteq \cX_{\pi(i)}(\widehat r_n)).
\end{equation*}
The following theorem provides an oracle-type inequality which ensures that this clustering risk is close to the optimal clustering risk, {\it i.e.}, the one achieved with the best value of $r$.
\begin{theorem}
  \label{theo:clustriskhatrn}
  Assume {\bf (A1)} and {\bf (A6)} hold. Let $\eta_0>0$ and $\eta_1>0$ be such that
  \begin{equation*}
    \eta_0=1-[(1-\varepsilon)(1+\gbar)]^{-1}
    \quad\text{and}\quad
    \frac{4\eta_1}{1-\eta_1}=\frac{\gamma_*}{\gamma^*}-\frac{1}{2}.
  \end{equation*}
  Then for all $0<\eta\leq\min(\eta_0,\eta_1)$ and all $n\geq M$, the clustering risk for clusters $\cX(\widehat r_n)$ satisfies
  \begin{equation}
    \label{eq:ineg_oracla_hatr}
    \mathcal R_n(\cX(\widehat r_n)) \leq \inf_{r>0}\mathcal R_n(\cX(r))+2M\exp(-\psi(\eta)(1-\varepsilon)\gbar n)
  \end{equation}
  where for $\eta>0$ $\psi(\eta)=(1+\eta)(\log(1+\eta)-1)+1>0$.
\end{theorem}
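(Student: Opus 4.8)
The plan is to prove~\eqref{eq:ineg_oracla_hatr} by a selection-type argument. First I would fix a deterministic radius $r^\star$ attaining $\inf_{r>0}\mathcal R_n(\cX(r))$ (the infimum ranges over the finitely many distinct clusterings produced as $r$ varies), and introduce a high-probability event $\Omega$ on which the cluster sizes concentrate. Writing $\mathrm{Fail}(r)$ for the event that no permutation $\pi\in\Pi_M$ satisfies $\mathbb{X}_n\cap S_i\subseteq\cX_{\pi(i)}(r)$ for all $i\in\interval M$, the target is the inclusion $\mathrm{Fail}(\widehat r_n)\cap\Omega\subseteq\mathrm{Fail}(r^\star)$. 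Since $\mathcal R_n(\cX(\widehat r_n))=\prob(\mathrm{Fail}(\widehat r_n))$, this inclusion yields at once $\mathcal R_n(\cX(\widehat r_n))\le\prob(\mathrm{Fail}(r^\star))+\prob(\Omega^c)=\inf_{r>0}\mathcal R_n(\cX(r))+\prob(\Omega^c)$, so everything reduces to (i) building $\Omega$ with $\prob(\Omega^c)\le 2M\exp(-\psi(\eta)(1-\varepsilon)\gbar n)$ and (ii) proving the deterministic implication ``$r^\star$ succeeds $\Rightarrow\widehat r_n$ succeeds'' on $\Omega$.

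For the good event, let $N_i=\#\{k:X_k\in S_i\}$ for $i\in\interval M$ and $N_0=\#\{k:X_k\in S_0\}$. Under~\textbf{(A1)} the supports are disjoint, so each $N_i$ is a binomial count with mean $np_i$, $p_i=(1-\varepsilon)\gamma_i$, and $N_0=n-\sum_i N_i$. I would take $\Omega=\bigcap_{i=1}^M\{(1-\eta)np_i\le N_i\le(1+\eta)np_i\}$. Each of the $2M$ one-sided deviations is controlled by the Chernoff bound for binomials, whose rate function is exactly $\psi$; using $\psi(-\eta)\ge\psi(\eta)$ for the lower tails and $p_i\ge(1-\varepsilon)\gamma_*\ge(1-\varepsilon)\gbar$ to uniformise the exponent, a union bound gives the announced $\prob(\Omega^c)\le 2M\exp(-\psi(\eta)(1-\varepsilon)\gbar n)$. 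On $\Omega$, for $\eta\le\min(\eta_0,\eta_1)$ and with $T=(1-\eta)(1-\varepsilon)\gamma_* n$, I record three facts: $\min_i N_i\ge T$; since $\sum_i N_i\ge(1-\eta)(1-\varepsilon)n$, the definition of $\eta_0$ forces $N_0<T$ (indeed $1-(1-\eta_0)(1-\varepsilon)=\gbar/(1+\gbar)$ and $\gbar<\gamma_*$); and the definition of $\eta_1$ is calibrated so that a perturbed form of the constraint $\gamma^*<2\gamma_*$ holds, which will prevent a single support from supplying two of the $M$ largest clusters.

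The core is the deterministic implication, which I would establish by analysing the connected components at an arbitrary radius $r$ through the $M$-th largest size $|\cX_M(r)|$, splitting failing radii into three types. At a \emph{correct} radius each support occupies its own top-$M$ cluster, so $|\cX_M(r)|\ge\min_i N_i\ge T$. If some component is \emph{mixed}, i.e.\ carries points of two distinct supports (any merge or outlier-bridge), then at most $M-1$ components carry support points, hence one top-$M$ cluster is outlier-only and $|\cX_M(r)|\le N_0<T$; so $\widehat r_n$, which satisfies $|\cX_M(\widehat r_n)|\ge|\cX_M(r^\star)|\ge T$, cannot have a mixed component. If no component is mixed yet the clustering fails, the failure is either a pushed-out support (impossible on $\Omega$, since it would force a non-support component of size $>T$ into the top $M$, contradicting $N_0<T$) or a \emph{split} (some support is not yet $r$-connected); by monotonicity of connectivity in $r$, split radii satisfy $r<\underline r$, the smallest correct radius. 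Thus if $\widehat r_n$ failed it would be a split radius below $\underline r$, and to exclude this I would show that a correct radius attains a value of $|\cX_M|$ at least as large while sitting at a larger radius, contradicting $\widehat r_n=\max\argmax|\cX_M|$; this is exactly where the $\eta_1$-calibrated ``no doubling'' property enters, as it caps the size a split support can contribute to the top $M$.

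I expect the main obstacle to be this last exclusion: the combinatorial control of $|\cX_M(r)|$ across all split configurations, and in particular ruling out that the procedure is fooled by splitting a large cluster into two pieces that crowd out a genuine support. This is precisely the role of Assumption~\textbf{(A6)}: $\gamma^*<2\gamma_*$ (made robust through $\eta_1$) blocks such doubling, while $\varepsilon<\gbar/(1+\gbar)$ (through $\eta_0$) keeps every outlier-only cluster strictly below $T$. The delicate bookkeeping is that connected components may absorb nearby outliers, so the fragment sizes entering the comparison are not pure support counts; I anticipate that the numerical factor in the definition of $\eta_1$ is exactly what absorbs this slack, and that the tie-breaking convention (selecting the largest maximiser) is essential to push $\widehat r_n$ past the split radii and onto the correct range. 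The remaining ingredients — the binomial tail estimate and the monotonicity of $r\mapsto$ connectivity — are routine.
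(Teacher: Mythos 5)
Your overall architecture matches the paper's: a good event $\Omega_\eta$ on which the support counts $N_i=|\mathbb{X}_n\cap S_{i}|$ concentrate (Chernoff bounds with rate function $\psi$ and a union bound over $2M$ tails give exactly the claimed remainder), plus a deterministic implication ``success at $r$ together with $\Omega_\eta$ implies success at $\widehat r_n$,'' from which the oracle inequality follows via $\prob(\mathrm{Fail}(\widehat r_n))\le\prob(\mathrm{Fail}(r))+\prob(\overline{\Omega_\eta})$. However, the deterministic implication --- which is the entire content of the theorem --- is not actually established. You explicitly defer the decisive case (your ``main obstacle''), namely ruling out that $\widehat r_n$ sits at a split radius below the correct one, and you only name the mechanism ($\gamma^*<2\gamma_*$ robustified through $\eta_1$) without running the argument. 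The paper closes this case by contradiction: if $\widehat r_n<r$ with $r$ correct, the $\max\argmax$ convention forces the strict inequality $|\cX_M(\widehat r_n)|>|\cX_M(r)|$, hence all $M$ top clusters at radius $\widehat r_n$ exceed $|\cX_M(r)|$ in size; since clusters only merge as the radius grows, each of them lies inside one of the $M-1$ largest clusters at radius $r$, and the pigeonhole principle yields $|\cX_1(r)|\ge 2|\cX_M(\widehat r_n)|>2|\cX_M(r)|$. This contradicts $|\cX_1(r)|\le N_{(1)}+N_0$ and $|\cX_M(r)|\ge N_{(M)}$ combined with the purely algebraic inequality $\frac{1+\eta}{1-\eta}\frac{\gamma^*}{\gamma_*}+\frac{\gbar}{\gamma_*}\le 2$, valid for $\eta\le\min(\eta_0,\eta_1)$. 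That pigeonhole step is the missing idea.

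Moreover, the one case you do argue in detail is flawed as stated: a mixed component does not imply that at most $M-1$ components carry support points, because another support may simultaneously be split into several fragments (mixing and splitting are not mutually exclusive below the connectivity threshold), and a large fragment together with absorbed outliers need not have size bounded by $N_0$. Your mixed-case conclusion $|\cX_M(r)|\le N_0<T$ is therefore only valid once all supports are $r$-connected, i.e.\ after you have already shown $\widehat r_n\ge r^\star$ --- which is precisely the step left open. Two minor points: the infimum over $r>0$ need not be attained since the breakpoints $\rho_k$ are random, so either prove the bound for every fixed $r$ and take the infimum afterwards (as the paper does) or pass to a near-minimiser; and once $\widehat r_n\ge r^\star$ is secured, the scenario where two supports merge at radius $\widehat r_n$ is excluded exactly by your $N_0<T$ observation, so the remainder of your plan is sound.
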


This theorem ensures that the data-driven selection of $r$ proposed in~\eqref{eq:choixharr} is efficient for $n$ large enough. Indeed, since $\psi(\eta)(1-\varepsilon)\gbar>0$, inequality~\eqref{eq:ineg_oracla_hatr} guarantees that the performance of our procedure is optimal, up to a remainder term which tends to zero at an exponential rate. In particular, if there exists a specific value $r_n$ of $r$ such that the clustering risk of $\cX(r_n)$ tends to $0$ as $n$ increases, Theorem~\ref{theo:clustriskhatrn} implies that the risk of $\cX(\widehat r_n)$ also tends to $0$.

To study the clustering risk of $\cX(r)$ for a given value of $r>0$, we define the parameters
\begin{equation}
  \label{eq:def-constAB}
  \constA=\frac{\gamma_*(\kappa^*\kappa_c)^{-1} \eta_{*}(d)}{1+\gbar}
  \quad\text{and}\quad
  \constB=\eta(\dd)\kappa_0,
\end{equation}
where
\begin{equation}\label{eq:kappa-eta-star}
  \kappa^{*}=\max_{i\in\interval{M}} \kappa_i
  \quad\text{and}\quad
  \eta_*(d)
  =\min_{0\leq s\leq d}\eta(s)
  =\min(1,\eta(d)).
\end{equation}

These parameters measure to some extent the complexity of the problem. Indeed, $\constB$ essentially depends on the density of the outliers through the parameter $\kappa_0$. Problems with sparse
outliers will correspond to a small value of $\constB$. The second parameter $\constA$ is related to the distribution
of the actual data in their supports $S_{i},i\in\{1,\dotsc,M\}$ and on the regularity of these supports. Regular supports ($\kappa_c$ small)
with a large density of observations ($\kappa^*$ small) lead to large values of $\constA$. To summarize, difficult problems
correspond to small $\constA$ and/or large $\constB$.

The following theorem controls the clustering risk of $\cX(r)$ in terms of the parameters of the model.
\begin{theorem}
  \label{thm:cluster-identification}
  Under assumptions~{\bf (A1)}--{\bf (A6)} we have, for any $0< r < \min(\min_i \Delta_i, \delta)$ and
  for any $\eta$ such that $0<\eta<\eta_0$
  \begin{equation}
    \label{eq:risqueclustrn}
    \mathcal R_n(\cX(r))\leq \Lambda r^{-d}\exp(-\constA n r^d)+ n\varepsilon (\constB \varepsilon n r^\dd)^{\lfloor\frac\delta r\rfloor}+2M\exp(-\psi(\eta)(1-\varepsilon)\gbar n),
  \end{equation}
  where $\Lambda$ is a positive constant specified in the proof of the Theorem.
\end{theorem}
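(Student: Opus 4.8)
The plan is to show that, for the fixed radius $r$, the clustering $\cX(r)$ recovers the true clusters whenever three favourable events occur simultaneously, and then to bound the probabilities of the three complementary events; these will match the three terms of~\eqref{eq:risqueclustrn}. Concretely, I would introduce the events (i) for every $i\in\interval{M}$ one has $S_i\subseteq B(\mathbb{X}_n\cap S_i,r/2)$, so that $\mathbb{X}_n\cap S_i$ lies in a single $r$-connected component (no support is split); (ii) no chain of outliers joins two distinct supports at scale $r$ (no two supports are merged); (iii) each component meeting a support is strictly larger than every component made only of outliers. Using the connectedness of $S_i$ from~\textbf{(A1)}, event (i) forces the $r$-neighbourhood of $\mathbb{X}_n\cap S_i$ to be connected, so each support sits in exactly one component; together with (ii) this yields exactly $M$ ``support components'', one per $S_i$, possibly enlarged by outliers; and (iii) guarantees these are precisely the $M$ largest clusters, i.e. $\cX_1(r),\dots,\cX_M(r)$. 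On this good event there is a permutation $\pi$ with $\mathbb{X}_n\cap S_i\subseteq\cX_{\pi(i)}(r)$, so the risk event does not occur. Hence $\mathcal R_n(\cX(r))$ is at most the sum of the probabilities of the complements of (i), (ii) and (iii), which I bound separately.

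For the complement of (i), I would run a covering argument. Fix $i$ and cover $S_i$ by balls of radius proportional to $r$ centred in $S_i$; by Assumption~\textbf{(A5)} their number is at most a constant times $r^{-s_i}\le r^{-d}$. If each ball contains at least one observation from $S_i$, then every point of $S_i$ lies within $r/2$ of a sample point and (i) holds for that support. Assumptions~\textbf{(A3)} and~\textbf{(A4)} give a lower bound of the form $(1-\varepsilon)\gamma_i\kappa_i^{-1}\kappa_c^{-1}\eta(s_i)r^{s_i}$ on the $\prob$-mass of such a ball, so the probability that a given ball is empty is at most $\exp(-c\,n r^{s_i})$. Summing over the $O(r^{-d})$ balls and over $i\in\interval{M}$, then using $r^{s_i}\ge r^{d}$, $\eta(s_i)\ge\eta_*(d)$, $\gamma_i\ge\gamma_*$, $\kappa_i\le\kappa^*$ and the inequality $1-\varepsilon>(1+\gbar)^{-1}$ coming from~\textbf{(A6)}, the exponential rate becomes $\constA n r^{d}$. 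This produces the first term $\Lambda r^{-d}\exp(-\constA n r^{d})$, the prefactor $\Lambda$ collecting the covering-number and universal constants.

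The delicate term is the second one, controlling the complement of (ii). Since two supports are at distance at least $\delta$ and $r<\delta$, any $r$-path joining $S_i$ to $S_j$ must cross the gap through points lying outside every support, i.e. outliers, and a step size $\le r$ forces at least $\lfloor\delta/r\rfloor$ such outliers. I would therefore bound the probability that some bridging chain exists by summing, over the pair of supports and over the ordered indices of the outliers in the chain, the probability of the corresponding configuration. For each new vertex of the chain the contribution factorises: a factor $n$ for the choice of index, a factor $\varepsilon$ for it being an outlier, and, by Assumption~\textbf{(A2)}, a factor $\prob_0(B(\cdot,r))\le\kappa_0\eta(\dd)r^{\dd}=\constB r^{\dd}$ for it to fall within $r$ of the previous vertex. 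Collecting one anchor outlier and the $\lfloor\delta/r\rfloor$ further outliers needed to cross the gap yields $n\varepsilon(\constB\varepsilon n r^{\dd})^{\lfloor\delta/r\rfloor}$, the geometric summation over longer chains and the union over support pairs being absorbed into the constants. The main difficulty is precisely this combinatorial bookkeeping: establishing $\lfloor\delta/r\rfloor$ as a genuine lower bound on the number of outliers and taming the sum over all admissible chains so that it collapses to a single power of $\constB\varepsilon n r^{\dd}$.

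Finally, for the complement of (iii) I would argue by concentration of the cluster cardinalities. Each count $|\mathbb{X}_n\cap S_i|$ is binomial with mean $(1-\varepsilon)\gamma_i n$ and the number of outliers is binomial with mean $\varepsilon n$; Chernoff bounds with the rate function $\psi$ control their deviations, and on the event that all these counts stay within a factor $(1\pm\eta)$ of their means the smallest support component exceeds the largest outlier-only component, so (iii) holds. The restriction $0<\eta<\eta_0$ together with~\textbf{(A6)} is exactly what makes this size separation valid, and the union bound over the relevant tails produces $2M\exp(-\psi(\eta)(1-\varepsilon)\gbar n)$. This last step is the same cardinality estimate that underlies Theorem~\ref{theo:clustriskhatrn}, which is why the same remainder appears in both statements. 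Summing the three contributions gives~\eqref{eq:risqueclustrn}.
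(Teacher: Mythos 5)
Your proposal is correct and follows essentially the same route as the paper: the failure event is split into the same three sub-events (a support not being $r$-connected, handled by the covering argument of Lemma~\ref{lem:within-cluster-connectivity}; an outlier chain of at least $\lfloor\delta/r\rfloor+1$ points bridging two supports, handled by the chaining bound of Lemma~\ref{lem:between-cluster-connectivity}; and the cardinality event $\Omega_\eta$ ensuring the $M$ largest components are the support components, handled by the Chernoff bound of Lemma~\ref{lem:probomegabar}), yielding exactly the three terms of~\eqref{eq:risqueclustrn}. The only cosmetic difference is that no geometric summation over longer chains is actually needed — a chain of length at least $m$ contains one of length exactly $m$, so the paper's bound carries no extra constant there.
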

The upper bound in~\eqref{eq:risqueclustrn} is governed by the first two terms since the last term generally tends to zero much faster.
Recall that the cluster family $\cX(r)$ may fail to recover the true clusters if one of these two conditions is satisfied:
\begin{enumerate}
  \item Observations in a same support are not $r$-connected: there exists $i\in\intervalb{1}{M}$ such that $\mathbb{X}_n\cap S_{i}$ is not  $r$-connected;
  \item Some observations that belong to different supports are $r$-connected: there is a $r$-connected path between $S_{i}$ and $S_{\!j}$ for $(i,j)\in\intervalb{1}{M}^2$ with $i\neq j$.
\end{enumerate}
The first term on the right-hand side of~\eqref{eq:risqueclustrn} corresponds to the first condition. Unsurprisingly, this term is small for large values of $r$ and/or $\constA$. The second term is related to the second condition and, unlike the first term, it tends to decrease when $r$ decreases. This second term also depends on the distribution of the outliers. We observe that it is equal to zero when there is no outlier,  and it increases as the outlier parameter $\constB$ and/or the proportion of outliers $\varepsilon$ grows.

Observe also that the minimal distance between supports $\delta$
occurs through the exponent $\lfloor\delta/r\rfloor$. If $\constB \varepsilon n r^\dd<1$, the second error term decreases as $\lfloor\delta/r\rfloor$ increases. Moreover, we can remark that the upper bound involves two dimensions: the (maximal) Hausdorff dimension $d$ of the support $S_{i}$ and the dimension $\dd$ of the outlier space $S_{0}$. For fixed values of $\dd$, we could obtain slower rates as $d$ increases because it is more difficult to connect observations for large values of $d$. On the opposite, keeping $d$ constant, rates could be faster when $\dd$ grows because the probability to connect observations in $S_{0}$ decreases. Combining Theorems~\ref{theo:clustriskhatrn} and~\ref{thm:cluster-identification} we obtain:
\begin{theorem}\label{theo:comb-th1-th2}
  Under assumptions~{\bf (A1)}--{\bf (A6)} we have, for all $n\geq M$, for any $0< r < \min(\min_i \Delta_i, \delta)$ and all  $\eta\leq\min(\eta_0,\eta_1)$:
  \begin{equation*}
    R_n(\cX(\widehat r_n)) \leq
    \inf_{r>0} \left\{\Delta r^{-d}\exp(-\constA n r^d)+ n\varepsilon (\constB\varepsilon n r^\dd)^{\lfloor\frac\delta r\rfloor}\right\}
    +4M\exp(-\psi(\eta)(1-\varepsilon)\gbar n).
  \end{equation*}
\end{theorem}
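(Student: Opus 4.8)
The plan is to derive the statement directly by substituting the pointwise risk bound of Theorem~\ref{thm:cluster-identification} into the oracle inequality of Theorem~\ref{theo:clustriskhatrn}. First I would fix $n\geq M$ and $\eta\leq\min(\eta_0,\eta_1)$, so that the hypotheses of both previous theorems hold simultaneously; in particular $\eta\leq\eta_0$, as required by Theorem~\ref{thm:cluster-identification}. Inequality~\eqref{eq:ineg_oracla_hatr} then reads
\begin{equation*}
  \mathcal R_n(\cX(\widehat r_n)) \leq \inf_{r>0}\mathcal R_n(\cX(r))+2M\exp(-\psi(\eta)(1-\varepsilon)\gbar n),
\end{equation*}
so that everything reduces to controlling $\inf_{r>0}\mathcal R_n(\cX(r))$.

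To bound this infimum, I would invoke~\eqref{eq:risqueclustrn}: for every admissible radius $0<r<\min(\min_i\Delta_i,\delta)$,
\begin{equation*}
  \mathcal R_n(\cX(r))\leq \Lambda r^{-d}\exp(-\constA n r^d)+ n\varepsilon (\constB \varepsilon n r^\dd)^{\lfloor\frac\delta r\rfloor}+2M\exp(-\psi(\eta)(1-\varepsilon)\gbar n).
\end{equation*}
Since $\inf_{r>0}\mathcal R_n(\cX(r))\leq\mathcal R_n(\cX(r))$ for each such $r$, and since the last term above is independent of $r$, taking the infimum over all admissible radii and factoring out that remainder gives
\begin{equation*}
  \inf_{r>0}\mathcal R_n(\cX(r))\leq \inf_{0<r<\min(\min_i\Delta_i,\delta)}\left\{\Lambda r^{-d}\exp(-\constA n r^d)+ n\varepsilon (\constB \varepsilon n r^\dd)^{\lfloor\frac\delta r\rfloor}\right\}+2M\exp(-\psi(\eta)(1-\varepsilon)\gbar n).
\end{equation*}

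Combining the two displays, the two identical $r$-free remainders add, turning the prefactor $2M$ into $4M$ and yielding the announced bound. The argument is pure bookkeeping once Theorems~\ref{theo:clustriskhatrn} and~\ref{thm:cluster-identification} are available, so I do not anticipate a substantive obstacle; the only points deserving care are notational and domain-related. One must identify the bracketed quantity in the conclusion (whose leading constant is written $\Delta$) with the bracket produced by Theorem~\ref{thm:cluster-identification} (whose constant is $\Lambda$), and one must keep in mind that the infimum written $\inf_{r>0}$ is in fact the infimum over the admissible range $0<r<\min(\min_i\Delta_i,\delta)$ flagged in the hypotheses, since Theorem~\ref{thm:cluster-identification} only controls $\mathcal R_n(\cX(r))$ for such $r$.
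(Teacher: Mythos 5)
Your proposal is correct and matches the paper exactly: the paper gives no separate proof of this theorem, stating only that it follows by ``combining Theorems~\ref{theo:clustriskhatrn} and~\ref{thm:cluster-identification},'' which is precisely the substitution-and-bookkeeping you carry out (including the doubling of the $2M$ remainder to $4M$). Your side remarks that the $\Delta$ in the displayed bound should be read as the constant $\Lambda$ of Theorem~\ref{thm:cluster-identification}, and that the infimum is effectively over the admissible range $0<r<\min(\min_i\Delta_i,\delta)$, correctly identify the notational slips in the paper's statement.
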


If we intend to prove any consistency results regarding $\mathcal R_n(\cX(\widehat r_n))$, we have to exhibit at least one value of $r$ such that the first terms in this upper bound tends to zero.
The following corollary provides sufficient conditions for the consistency of the clustering procedure, {\it i.e.}, conditions for which we have
\begin{equation}
  \label{eq:defconsistency}
  \lim_{n\to +\infty}\mathcal R_n(\cX(\widehat r_n))=0.
\end{equation}
Except for $D$ and $d$, all parameters ($\delta$, $\kappa^*$, $\varepsilon$...) may vary with $n$. For simplicity, we only let $\varepsilon$ vary with $n$
and keep all other parameters fixed in the conditions.

\begin{corollary}
  Under the assumptions of Theorem~\ref{theo:comb-th1-th2}, consistency~\eqref{eq:defconsistency} holds if  either $d<\dd$ or $D=d$ and $\varepsilon < (\constB\log n)^{-1}$.
\end{corollary}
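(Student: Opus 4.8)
The plan is to show that each of the three terms in the bound of Theorem~\ref{theo:comb-th1-th2} tends to zero along a well-chosen sequence $r_n\to0$, which suffices because the infimum over $r>0$ is dominated by the value at $r=r_n$. The last term, $4M\exp(-\psi(\eta)(1-\varepsilon)\gbar n)$, is the easiest to dispose of: under \textbf{(A6)} we have $\gbar>0$ and $\varepsilon<\gbar/(1+\gbar)<1$, so $1-\varepsilon$ stays bounded below by a positive constant even when $\varepsilon$ varies with $n$; since $\psi(\eta)>0$ for the fixed admissible $\eta$, this term decays exponentially in $n$ and may be ignored. It then remains to kill the two terms inside the infimum.

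First I would take $r_n=(c\log n/n)^{1/d}$ for a constant $c$ to be fixed, so that $nr_n^{d}=c\log n$. With this choice the first term becomes $\Lambda r_n^{-d}\exp(-\constA nr_n^{d})=\Lambda(c\log n)^{-1}n^{\,1-\constA c}$, which tends to zero as soon as $\constA c\ge1$, the borderline value $c=1/\constA$ already working thanks to the $1/\log n$ factor. The point worth stressing is that the polynomial prefactor $r_n^{-d}\asymp n/\log n$ forces $nr_n^{d}$ to be at least of order $\log n$: this lower bound on the admissible scale is intrinsic to the bound and is exactly what will constrain $\varepsilon$.

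Next I would handle the outlier term $n\varepsilon(\constB\varepsilon nr_n^{\dd})^{\lfloor\delta/r_n\rfloor}$, whose exponent satisfies $\lfloor\delta/r_n\rfloor\asymp n^{1/d}(\log n)^{-1/d}$ and is therefore far larger than any power of $\log n$. When $d<\dd$, writing $nr_n^{\dd}=(c\log n)^{\dd/d}\,n^{\,1-\dd/d}$ shows that the base $\constB\varepsilon nr_n^{\dd}\to0$, since $1-\dd/d<0$ and $\varepsilon$ is bounded; a base tending to $0$ raised to a power growing polynomially in $n$ crushes the term to zero even after multiplication by the polynomial prefactor $n\varepsilon$, and this holds for every admissible $\varepsilon$. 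When $\dd=d$ the base is exactly $\constB\varepsilon c\log n$, so choosing $c=1$ makes it equal to $\constB\varepsilon\log n$, and the hypothesis $\varepsilon<(\constB\log n)^{-1}$ keeps it strictly below $1$; once the base is bounded below $1$, raising it to the power $\lfloor\delta/r_n\rfloor\to\infty$ again forces the whole term to $0$, the factor $n\varepsilon$ being negligible on the logarithmic scale against $\lfloor\delta/r_n\rfloor\cdot|\log(\text{base})|\asymp n^{1/d}$.

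The main obstacle is the simultaneous calibration of $c$ in the critical regime $\dd=d$: the first term pushes $c$ upward ($\constA c\ge1$), whereas the outlier term pushes $c$ downward so that $\constB\varepsilon c\log n<1$. Reconciling these two competing requirements at the scale $nr_n^{d}\asymp\log n$ is precisely what produces the threshold on $\varepsilon$; the clean form $\varepsilon<(\constB\log n)^{-1}$ corresponds to the choice $c=1$, for which the first term reads $\Lambda(\log n)^{-1}n^{\,1-\constA}$ and still vanishes in the relevant range of $\constA$ (and otherwise a slightly larger $c>1/\constA$ absorbs the gap into the fixed constants). The remaining estimates, namely bounding $q^{m}$ with $q<1$ and $m\to\infty$ against a polynomial prefactor, are routine once this balancing is settled.
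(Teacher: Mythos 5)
Your proof is correct and follows essentially the same route as the paper, whose entire argument is the single remark that one should take $r^d = D\log(n)/(\constA d\, n)$ --- i.e.\ your $c$ equal to $D/(\constA d)$ --- so that the first term is of order $n^{1-D/d}/\log n$ and the base of the second term is $\constB\varepsilon n r^{\dd}\to 0$ when $d<\dd$ and is proportional to $\constB\varepsilon\log n$ when $D=d$, exactly as in your analysis. The calibration tension you flag in the critical regime $D=d$ with $\constA<1$ is real, but it is equally present in the paper's own choice (which makes the base $\constB\varepsilon\log n/\constA$, controlled by the stated threshold $\varepsilon<(\constB\log n)^{-1}$ only up to the same constant factor you absorb informally), so your treatment matches the paper's level of rigor.
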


We obtain this result by taking $r^d = D\log(n)/(\constA d n)$.
This corollary ensures that consistency holds as soon as the Hausdorff dimensions of the supports $S_{\!1},\dotsc,S_{\!M}$ are smaller than the dimension $\dd$ of the ambient space. When these dimensions match, the proportion of outliers should tend to $0$ much faster than $1/\log n$. Observe also that without outliers ($\varepsilon=0$), convergence occurs for all $d\leq\dd$.
Using similar tools, we can obtain many rates of convergence with respect to the proportion $\varepsilon$ of outliers. Some examples are gathered in the following corollary.

\begin{corollary}
  Under the assumptions of Theorem~\ref{theo:comb-th1-th2}, there exists two universal constants $C_1$ and $C_2$ such that the following propositions hold:
  \begin{enumerate}
    \item Few outliers:  if $\varepsilon=\exp(-n)$ then:
          \begin{equation*}
            R_n(\cX(\widehat r_n)) \leq C_1 n\exp(-C_2n).
          \end{equation*}

    \item Small dimensions of the supports: if $D>d+1$ then
          \begin{equation*}
            R_n(\cX(\widehat r_n)) \leq C_1n\exp(-C_2n^{1/(d+1)}).
          \end{equation*}
    \item Large dimensions of the supports with few outliers: if $d\leq D\leq d+1$ and $\varepsilon=n^{-\beta}$ with $\beta\geq 1-D/(d+1)$, then
          \begin{equation*}
            R_n(\cX(\widehat r_n)) \leq C_1n^{d/(d+1)}\exp(-C_2n^{1/(d+1)}).
          \end{equation*}
    \item Large dimensions of the supports with many outliers: if $d\leq D\leq d+1$ and $\varepsilon=n^{-\beta}$ with  $0<\beta<1-D/(d+1)$, then
          \begin{equation*}
            R_n(\cX(\widehat r_n)) \leq C_1n^{(1-\beta)d/D}\exp(-C_2n^{1+(\beta-1)d/D}).
          \end{equation*}
  \end{enumerate}
\end{corollary}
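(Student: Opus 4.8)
The plan is to feed Theorem~\ref{theo:comb-th1-th2} with a suitable radius $r=r_n$ in each of the four regimes and read off the bound; since the right-hand side is an infimum over $r$, any admissible choice $0<r_n<\min(\min_i\Delta_i,\delta)$ already yields an upper bound. Write the three summands as $T_1(r)=\Lambda r^{-d}\exp(-\constA n r^d)$, $T_2(r)=n\varepsilon(\constB\varepsilon n r^\dd)^{\lfloor\delta/r\rfloor}$ and $T_3=4M\exp(-\psi(\eta)(1-\varepsilon)\gbar n)$, where $\Lambda$ is the positive constant from Theorem~\ref{thm:cluster-identification}; throughout, $C_1,C_2$ are allowed to depend on the fixed model parameters but not on $n$. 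Since \textbf{(A6)} keeps $\varepsilon$ bounded away from $1$, we have $T_3=O(\exp(-cn))$ for a fixed $c>0$, which decays at least as fast as every target rate below and can be absorbed into the final constants. It remains to balance $T_1$ and $T_2$. The tension is the usual one: $T_1$ is decreasing in $r$ once $nr^d$ is large, whereas $T_2$ is only useful when its base $\constB\varepsilon n r^\dd$ is below $1$, which pushes $r$ down. I will always arrange the base to be $\le1/2$; then, using $\lfloor\delta/r\rfloor\ge\delta/(2r)$ (valid once $r\le\delta/2$), one gets $T_2\le n\varepsilon\exp(-\tfrac{\delta\log2}{2r})$.

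In the two regimes where outliers are cheap I take $r$ as large as the geometry allows. For item~1 ($\varepsilon=e^{-n}$) I fix $r_n=r_0$, a constant strictly below $\min(\min_i\Delta_i,\delta)$; then $T_1=C\exp(-\constA r_0^d\,n)$, while the prefactor $\varepsilon=e^{-n}$ makes both $T_2$ and $T_3$ of order $\exp(-cn)$, so $\mathcal R_n\le C_1\exp(-C_2 n)\le C_1 n\exp(-C_2 n)$. For item~2 ($D>d+1$) I take $r_n^d=\lambda n^{-d/(d+1)}$, so that $nr_n^d\asymp n^{1/(d+1)}$ and $T_1\asymp n^{d/(d+1)}\exp(-C_2 n^{1/(d+1)})$; the base equals $\constB\varepsilon\lambda^{D/d}n^{1-D/(d+1)}\to0$ because $D/(d+1)>1$, for every bounded $\varepsilon$, so $T_2$ is super-polynomially small. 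Bounding $n^{d/(d+1)}\le n$ gives the stated rate.

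For the outlier-heavy regimes ($d\le D\le d+1$, $\varepsilon=n^{-\beta}$) I shrink $r$ just enough to tame the base. In item~3 ($\beta\ge1-D/(d+1)$) the choice $r_n^d=\lambda n^{-d/(d+1)}$ still works provided $\lambda$ is small enough that $\constB\lambda^{D/d}\le1/2$: the base is then $\constB\lambda^{D/d}n^{1-\beta-D/(d+1)}\le1/2$ (its exponent being $\le0$), whence $T_2\le n^{1-\beta}\exp(-c\,n^{1/(d+1)})$, while $T_1\asymp n^{d/(d+1)}\exp(-c'n^{1/(d+1)})$, giving the announced $n^{d/(d+1)}\exp(-C_2 n^{1/(d+1)})$. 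In item~4 ($\beta<1-D/(d+1)$) this choice would let the base diverge, so I instead set $r_n^\dd=c_0 n^{\beta-1}$ with $\constB c_0\le1/2$; then $r_n^d\asymp n^{(\beta-1)d/D}$, so $T_1\asymp n^{(1-\beta)d/D}\exp(-C_2 n^{1+(\beta-1)d/D})$, exactly the claimed prefactor and exponent, while $T_2\le n^{1-\beta}\exp(-c\,n^{(1-\beta)/D})$.

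The main obstacle is to check, in item~4, that $T_1$ rather than $T_2$ dictates the rate. Their exponential exponents are $1+(\beta-1)d/D$ and $(1-\beta)/D$ respectively, and a one-line computation shows that $1+(\beta-1)d/D\le(1-\beta)/D$ is equivalent to $\beta\le1-D/(d+1)$, precisely the defining inequality of item~4; at the boundary $\beta=1-D/(d+1)$ both exponents collapse to $n^{1/(d+1)}$, confirming that items~3 and~4 agree there. Once the dominant term is singled out, what remains is routine: one lower-bounds $\lfloor\delta/r_n\rfloor$ by $\delta/(2r_n)$ for $n$ large (legitimate since $r_n\to0$, which also guarantees $r_n<\min(\min_i\Delta_i,\delta)$), and one absorbs every polynomial prefactor into the exponential by slightly decreasing $C_2$, so that a bound $C n^{p'}\exp(-c\,n^\alpha)$ is freely upgraded to $C_1 n^{p}\exp(-C_2 n^\alpha)$ with the prefactor $n^{p}$ displayed in each item.
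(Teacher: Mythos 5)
Your proposal is correct and follows exactly the route the paper intends: the paper gives no detailed argument for this corollary beyond the remark that one plugs specific radii $r_n$ into Theorem~\ref{theo:comb-th1-th2}, and your choices ($r_n$ constant, $r_n^d\asymp n^{-d/(d+1)}$, and $r_n^{\dd}\asymp n^{\beta-1}$) are the natural ones producing the stated rates, with the balancing computation in item~4 (the equivalence of $1+(\beta-1)d/D\le(1-\beta)/D$ with $\beta\le 1-D/(d+1)$) correctly identifying the dominant term. The remaining bookkeeping (absorbing polynomial prefactors, handling small $n$ via the constant $C_1$, and checking $r_n<\min(\min_i\Delta_i,\delta)$) is handled adequately.
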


To summarize, in each of the above situations, we obtain an upper bound of the form
\[
  R_n(\cX(\widehat r_n)) \leq C_1 n^A\exp\left(-C_2 n^{B}\right)
\]
where $A$, $B$ are given positive constants that depend on the complexity of the problem.
We would like to highlight some key points. The fastest rates of convergence are reached in case~1, when the proportion of outliers is at its lowest level. In case~2, the data lie into sets whose dimension is much smaller than the dimension of the ambient space and the rates of convergence only depend on the parameter $d$. In the last two cases, $d$ is close to $\dd$. Rates of convergence mainly depend on the proportion of outliers.

\section{Numerical experiments}\label{sec:simus}

This section is dedicated to the evaluation of our clustering procedure through two different simulation studies that are described below.
First, we simulate data according to the design introduced in Section~\ref{sec:model}. This part can be viewed as an illustration of Theorem~\ref{theo:comb-th1-th2}. More precisely, we examine and compare the performance of both our procedure (OSL) and the single linkage algorithm (SL) in the presence of outliers. We also consider the spectral clustering algorithm \citep[SC, see][for a brief presentation of this method]{von2007tutorial}.  Indeed, SC shares several properties with our method such as the ability to detect clusters with low-dimensional geometrical structures and the fact that the number of groups is the main tuning parameter of the algorithm.
Next, we use several labelled clustering problems that can be found in the literature. Note that the corresponding data do not necessarily follow our model or satisfy the assumptions introduced in Section~\ref{sec:model}. For these datasets, OSL is compared with SL, SC and also other common clustering algorithms such as the $k$-means algorithm \citep[KMeans, see][]{macqueen1967some}, the trimmed $k$-means (TKMeans), an extension of KMeans introduced in~\citet{cuesta1997trimmed}, the density-based spatial clustering of applications with noise algorithm \citep[DBSCAN, see][]{ester1996density} and its hierarchical version \citep[HDBSCAN, see][]{campello2013density}.
In all experiments, the number of groups $M$ is assumed to be known. Simulation studies have been performed in \texttt{R}. A \texttt{R} implementation of OSL is available at \url{https://github.com/klutchnikoff/outliersSL}.

\subsection{Sensitivity to the parameters of the model}
\label{sec:simu1}

Here, we examine the performance of OSL in the mathematical framework described in Section~\ref{sec:model}. As stated in Section~\ref{sec:main_results}, the efficiency of the proposed clustering algorithm depends on the complexity of the clustering problem, measured through the parameters $(\constA,\constB, \delta, \epsilon)$. Among them, the most sensitive are the intergroup distance $\delta$ and the proportion of outliers $\varepsilon$. Of course, the larger $\varepsilon$ and the smaller $\delta$, the more difficult the problem. The following subsection describes the considered scenarios.

\subsubsection{Description of the models}

To simulate our data, we use three different models with different values for the intergroup distance $\delta$, the proportion of outliers $\varepsilon$ and the sample size $n$. More precisely, for each model, we consider two values of $\delta$ (one ``small value'' which corresponds to an \emph{easy} case and one ``large value'' for a \emph{tricky} case), five values of $\varepsilon$ (equally spaced between $0$ and $0.2$ with a step of 0.05), and two different sample sizes ($n=200$ and $n=500$).
For each model, both groups and outliers are uniformly sampled over their supports $S_{i},i=1,\dotsc,M$ and $S_{0}$. We now describe the three models.

\paragraph{Squares model} Data are grouped in three distinct squares with similar areas. We use the same weights for each group. Easy and tricky cases correspond to intergroup distances 0.35 and 0.07 respectively, see Figure~\ref{fig:perfs_square}.

\paragraph{Concentric circles} This model consists of two nested rings with weight 0.4 for the smallest ring and 0.6 for the largest ring. Intergroup distances are fixed to 2.6 (easy) and 1.6 (tricky), see Figure~\ref{fig:perfs_spiral}. Outliers are generated only between the two rings.

\paragraph{Sine model}  This model includes 3 groups with various shapes. The first group is tight and represents the sine curve while the two others are two compact squares. We use the same weights for each group, they are separated from 1.18 (easy) and 0.76 (tricky), see Figure~\ref{fig:perfs_sinus}.
\bigskip

Simple calculations show that these models satisfy assumptions \textbf{(A1)}-\textbf{(A6)} when $\varepsilon<1/11$ for the ``concentric circles'' model and $\varepsilon<1/7$ for the two others. We can also remark that the Hausdorff dimension for all supports equals 2, except for the sine group where it equals 1.

\subsubsection{Performance according to the proportion of outliers and the sample size}

Through various numerical experiments based on the scenarios described in the previous section, the performance of OSL is evaluated and compared with the one about SL and SC. Regarding the implementation and the calibration of the algorithms, OSL and SL have been implemented by using the functions \texttt{hclust} (package \texttt{fastcluster}) and \texttt{cutree} (package \texttt{stats}). SC has been implemented following~\cite{ng2002spectral} and using the function \texttt{specc} (package \texttt{kernlab}). The scaling parameter is set to the optimal value provided by $\texttt{specc}$, and we consider 20 different random starts for the $k$-means step of the algorithm. The three algorithms require the knowledge of the number of groups $M$ which is assumed to be known. Observe that for our proposed data-driven approach OSL, $M$ is the only parameter that needs to be tuned. In each scenario, the clustering risk  (\ref{eq:defclustrisk}) of each clustering algorithm is approximated by its empirical estimator computed over $B=1000$ Monte Carlo replications
\begin{equation}
  \label{eq:est_risque_MC}
  \frac{1}{B} \sum_{b=1}^{B}\1_{\left\lbrace \forall \, \pi \in \Pi_{M}, \, \exists  i=1,\dotsc,M, \; \mathbb{X}_n^b \cap S_{i} \nsubseteq \cX^b_{\pi(i)}\right\rbrace },
\end{equation}
where $\cX^b = \lbrace \cX^b_1,\dotsc,\cX^b_M \rbrace$ denotes the clusters obtained by the procedure on the $b$-th Monte Carlo sample of data $\mathbb{X}_n^b = \{X^b_{1},\dotsc,X^b_{n}\}$. Estimator~\eqref{eq:est_risque_MC} corresponds to the proportion of Monte Carlo replications in which the clustering procedure does not correctly recover one subset of observations from at least one of the $S_{i}$'s.
Figures~\ref{fig:perfs_square}-\ref{fig:perfs_sinus} display the three models and the empirical estimate~\eqref{eq:est_risque_MC} of the clustering risk according to $\varepsilon$, $n$ and $\delta$ for all algorithms and each model.

\begin{figure}[hp]
  {\small
    \centering
    {\footnotesize
      \begin{tabular}{c}
        \renewcommand{\arraystretch}{2}
        \begin{tabular}{ccc}
          (a)                                                                     & \hspace{2cm} & (b)                                                                   \\
          \includegraphics[width=5cm,height=4cm]{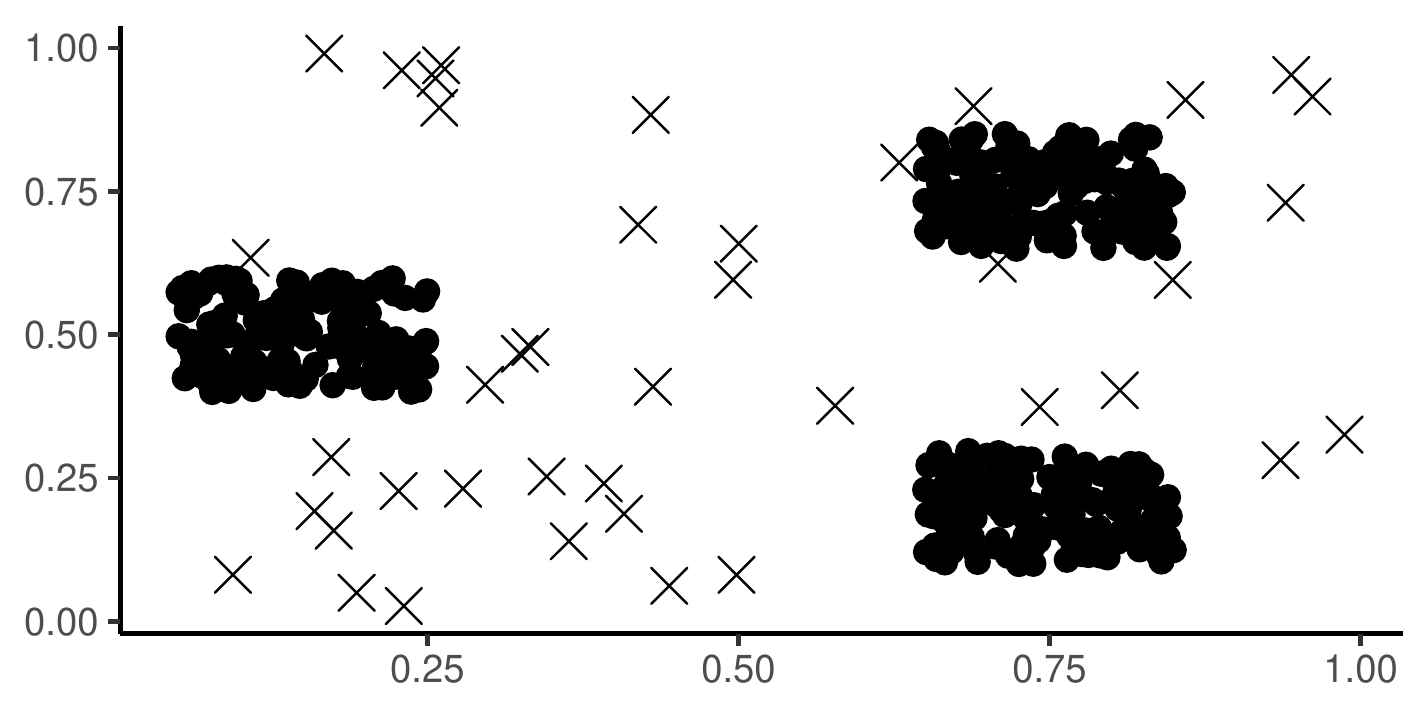} & \hspace{2cm} & \includegraphics[width=5cm,height=4cm]{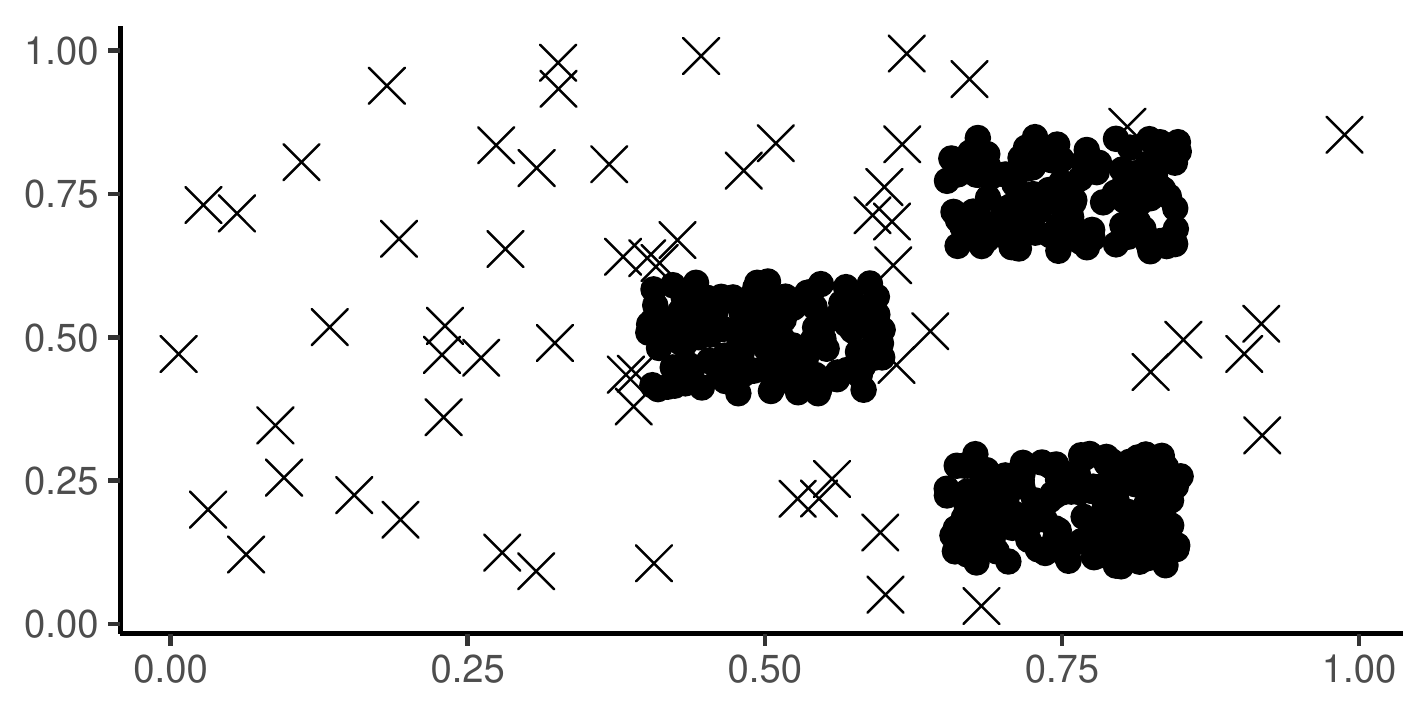}
        \end{tabular}                                                     \\
        \renewcommand{\arraystretch}{1}
        \begin{tabular}{p{2cm}llll}
          \toprule
              & \multicolumn{2}{c}{\textit{ n = 200 }} & \multicolumn{2}{c}{\textit{ n = 500 }}                                       \\[-1.3ex]
              & \multicolumn{2}{c}{\hrulefill}         & \multicolumn{2}{c}{\hrulefill}                                               \\
              & $\varepsilon=0$                        & $\varepsilon=0.2$                      & $\varepsilon=0$ & $\varepsilon=0.2$ \\
          \toprule
          \multicolumn{5}{c}{Case (a): $\delta$ large }                                                                               \\ [1ex]
          OSL & 0.000 (0.000)                          & 0.000 (0.000)                          & 0.000 (0.000)   & 0.000 (0.000)     \\
          SL  & 0.000 (0.000)                          & 0.958 (0.006)                          & 0.000 (0.000)   & 0.997 (0.002)     \\
          SC  & 0.000 (0.000)                          & 0.162 (0.012)                          & 0.000 (0.000)   & 0.145 (0.011)     \\[1ex]
          \multicolumn{5}{c}{Case (b): $\delta$ small}                                                                                \\[1ex]
          OSL & 0.000 (0.000)                          & 0.014 (0.004)                          & 0.000 (0.000)   & 0.002 (0.001)     \\
          SL  & 0.000 (0.000)                          & 1.000 (0.000)                          & 0.000 (0.000)   & 1.000 (0.000)     \\
          SC  & 0.000 (0.000)                          & 0.604 (0.015)                          & 0.000 (0.000)   & 0.469 (0.016)     \\
          \bottomrule
        \end{tabular}                                                     \\
        \includegraphics[width=13cm,height=9cm]{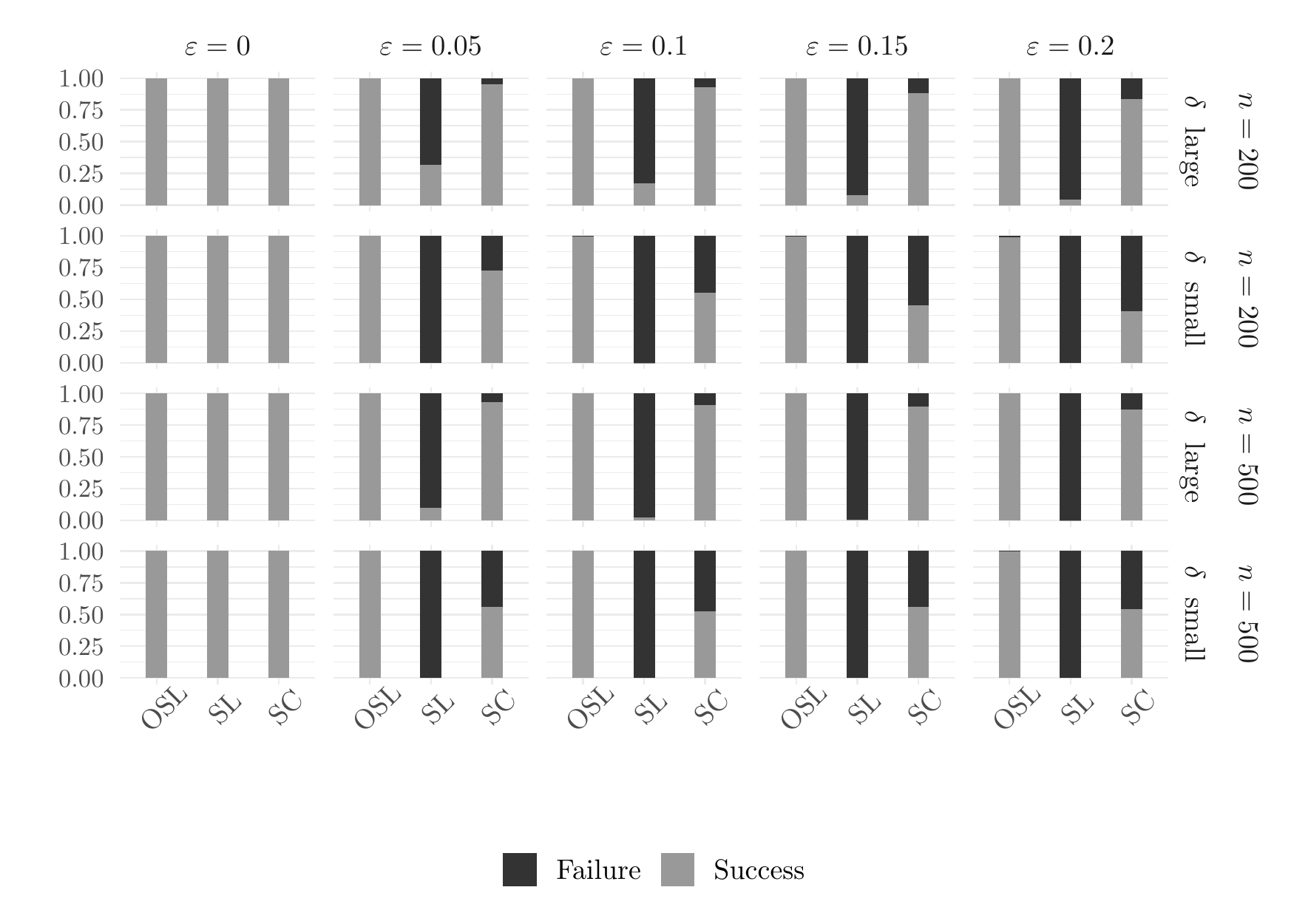} \\
      \end{tabular}}
    \caption{Results in \textit{squares} model. From top to bottom :  a sample of $n=500$ observations with $\varepsilon=0.1$ and (a) $\delta=0.35$  and (b) $\delta =0.07$; a table and a barplot displaying the empirical estimate of the clustering risk according to $\varepsilon$, $n$ and $\delta$.}
    \label{fig:perfs_square}
  }
\end{figure}

{\small
\begin{figure}[hp]

  \centering
  {\footnotesize
    \begin{tabular}{c}
      \renewcommand{\arraystretch}{2}
      \begin{tabular}{cc}
        (a)                                                                      & (b)                                                                    \\
        \includegraphics[width=5cm,height=4cm]{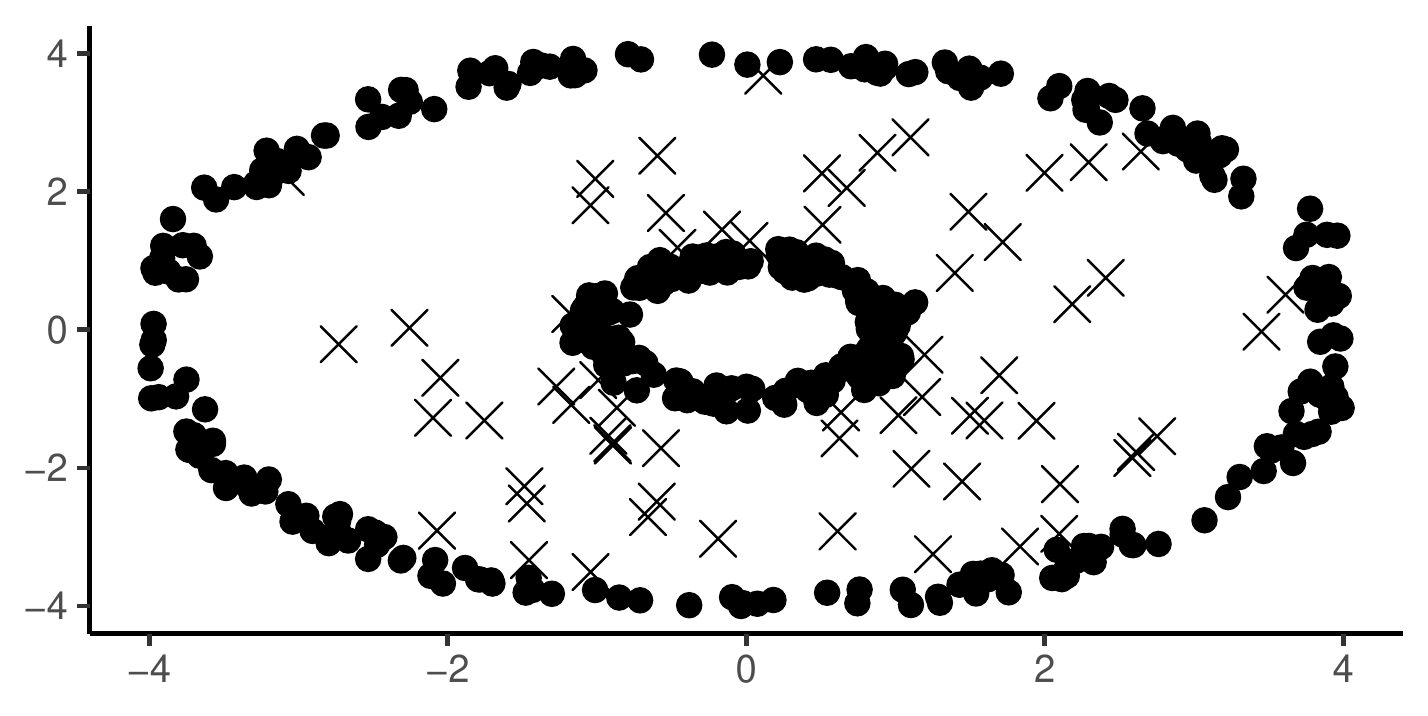} & \includegraphics[width=5cm,height=4cm]{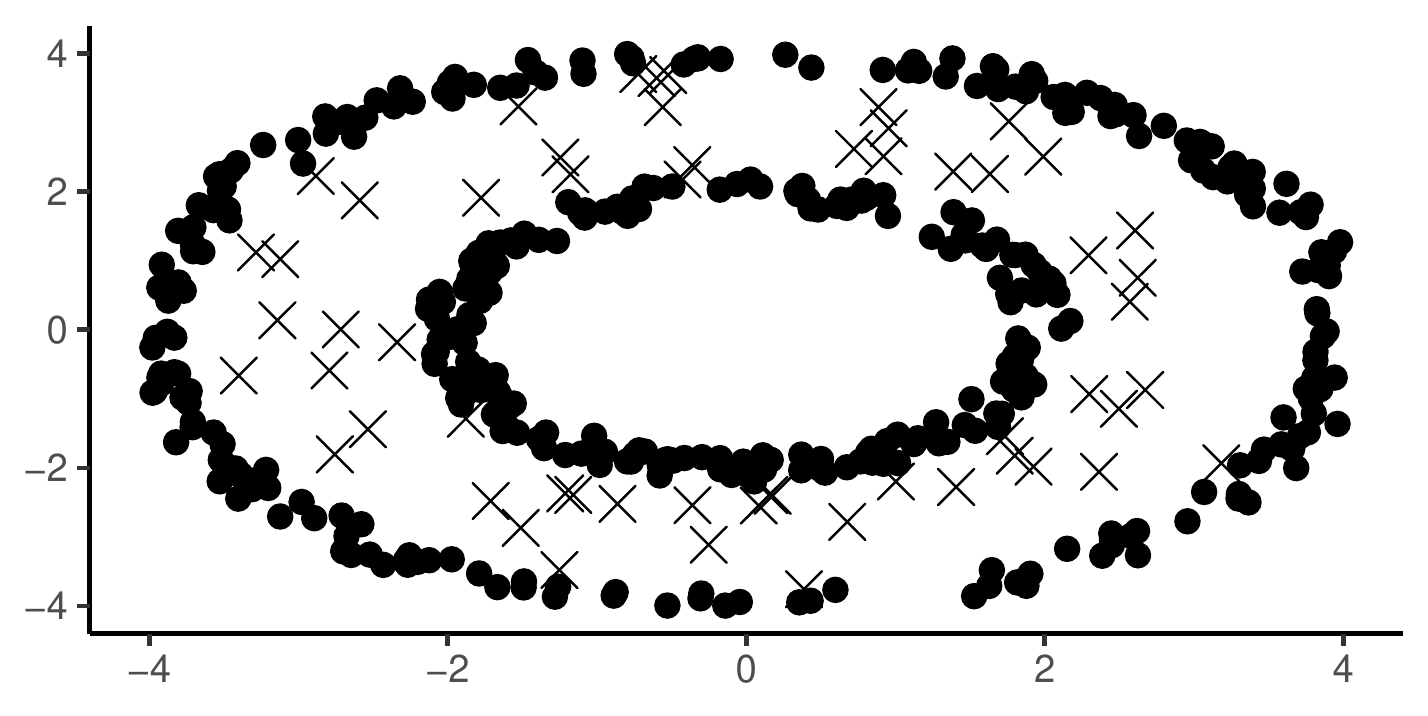} \\
      \end{tabular}                                                     \\

      \renewcommand{\arraystretch}{1}
      \begin{tabular}{p{2cm}llll}
        \toprule
            & \multicolumn{2}{c}{\textit{ n = 200 }} & \multicolumn{2}{c}{\textit{ n = 500 }}                                       \\[-1.3ex]
            & \multicolumn{2}{c}{\hrulefill}         & \multicolumn{2}{c}{\hrulefill}                                               \\
            & $\varepsilon=0$                        & $\varepsilon=0.2$                      & $\varepsilon=0$ & $\varepsilon=0.2$ \\
        \toprule
        \multicolumn{5}{c}{Case (a): $\delta$ large }                                                                               \\ [1ex]
        OSL & 0.001 (0.001)                          & 0.863 (0.011)                          & 0.000 (0.000)   & 0.342 (0.015)     \\
        SL  & 0.000 (0.000)                          & 0.973 (0.005)                          & 0.000 (0.000)   & 0.993 (0.003)     \\
        SC  & 0.000 (0.000)                          & 0.744 (0.014)                          & 0.000 (0.000)   & 0.192 (0.012)     \\[1ex]
        \multicolumn{5}{c}{Case (b): $\delta$ small}                                                                                \\[1ex]
        OSL & 0.001 (0.001)                          & 0.997 (0.002)                          & 0.000 (0.000)   & 0.909 (0.009)     \\
        SL  & 0.000 (0.000)                          & 0.999 (0.001)                          & 0.000 (0.000)   & 1.000 (0.000)     \\
        SC  & 0.001 (0.001)                          & 0.996 (0.002)                          & 0.000 (0.000)   & 0.863 (0.011)     \\
        \bottomrule
      \end{tabular}                                                     \\
      \includegraphics[width=13cm,height=9cm]{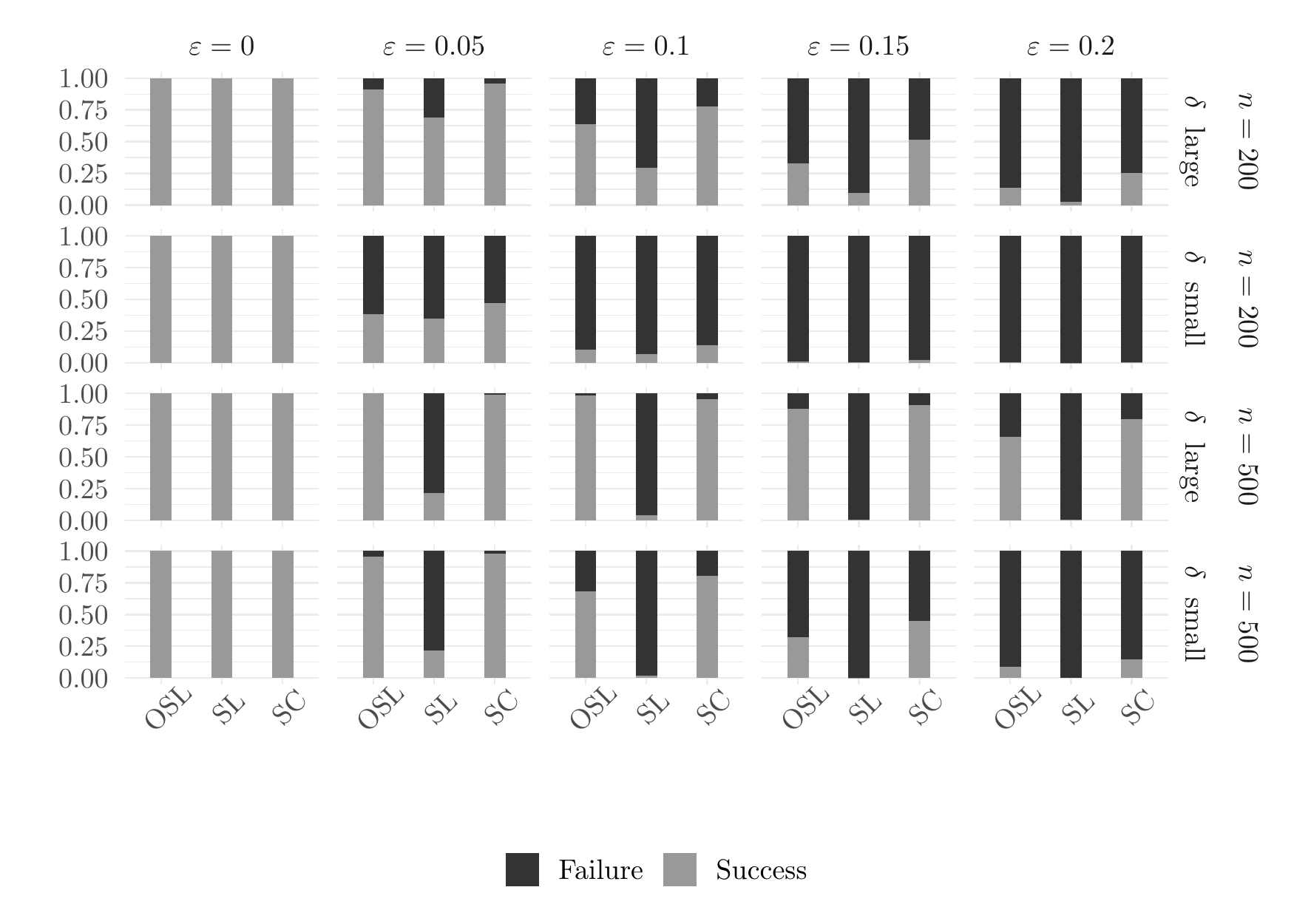} \\
    \end{tabular}	}
  \caption{Results in \textit{concentric circles} model. From top to bottom :  a sample of $n=500$ observations with $\varepsilon=0.1$ and (a) $\delta=2.6$  and (b) $\delta=1.6$;  a table and a barplot displaying the empirical estimate of the clustering risk according to $\varepsilon$, $n$ and $\delta$.}
  \label{fig:perfs_spiral}
\end{figure}}

{\small
\begin{figure}[hp]

  \centering
  {\footnotesize
    \begin{tabular}{c}
      \renewcommand{\arraystretch}{2}
      \begin{tabular}{cc}
        (a)                                                                    & (b)                                                                  \\
        \includegraphics[width=6cm,height=4cm]{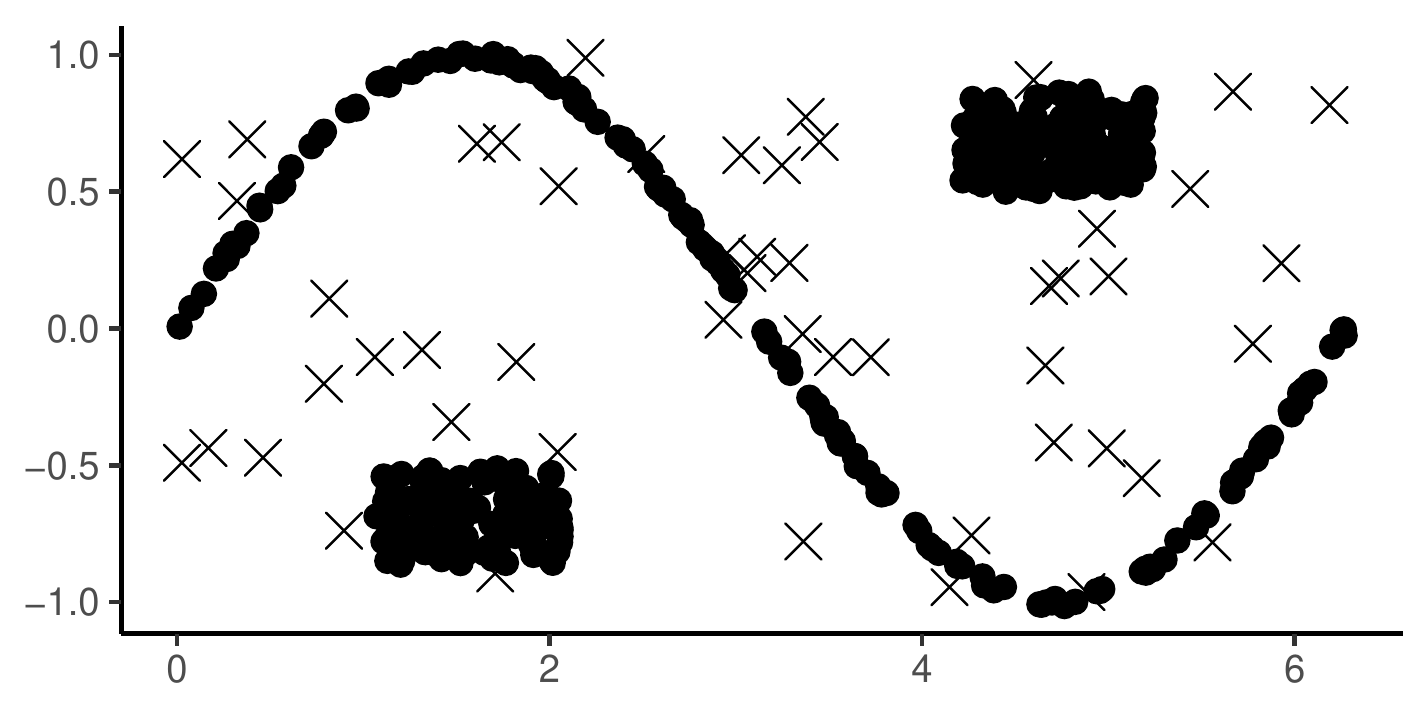} & \includegraphics[width=6cm,height=4cm]{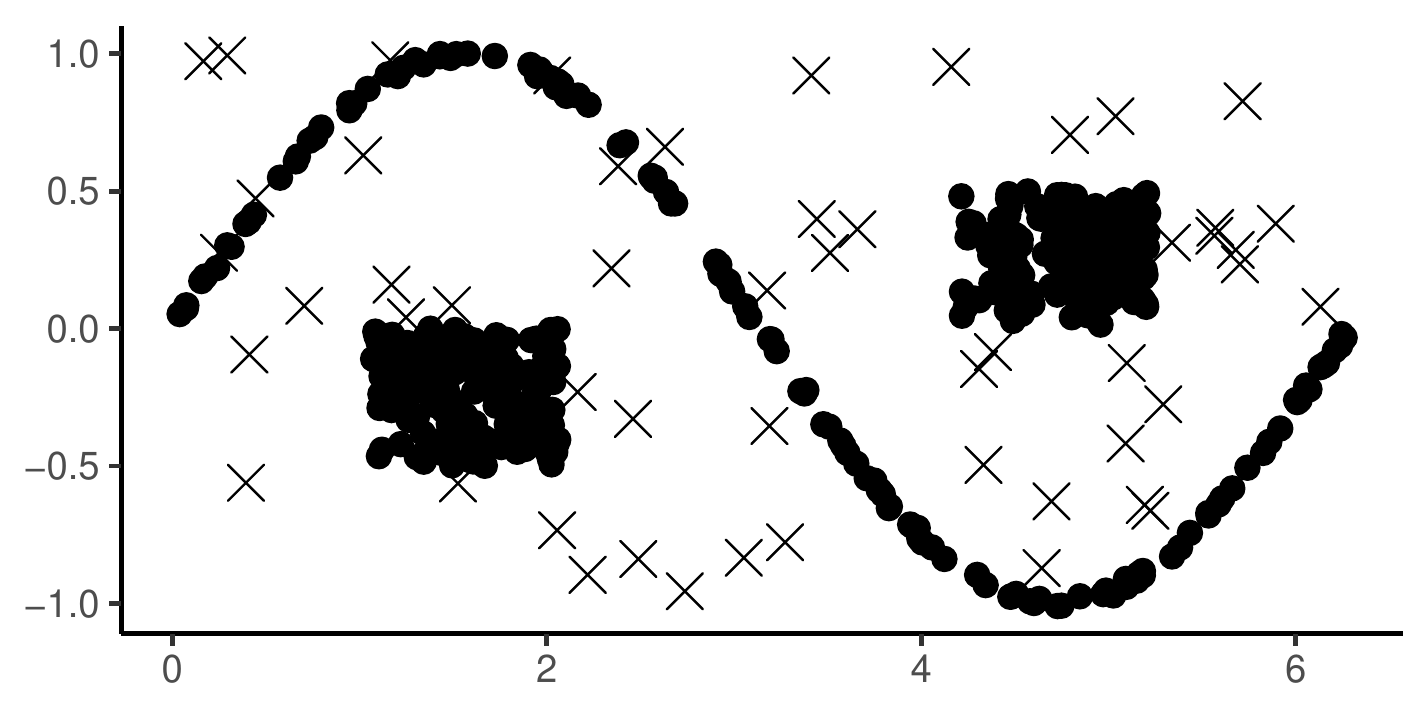} \\
      \end{tabular}                                                    \\
      \renewcommand{\arraystretch}{1}
      \begin{tabular}{p{2cm}llll}
        \toprule
            & \multicolumn{2}{c}{\textit{ n = 200 }} & \multicolumn{2}{c}{\textit{ n = 500 }}                                       \\[-1.3ex]
            & \multicolumn{2}{c}{\hrulefill}         & \multicolumn{2}{c}{\hrulefill}                                               \\
            & $\varepsilon=0$                        & $\varepsilon=0.2$                      & $\varepsilon=0$ & $\varepsilon=0.2$ \\
        \toprule
        \multicolumn{5}{c}{Case (a): $\delta$ large }                                                                               \\ [1ex]
        OSL & 0.001 (0.001)                          & 0.796 (0.013)                          & 0.000 (0.000)   & 0.307 (0.015)     \\
        SL  & 0.001 (0.001)                          & 0.979 (0.005)                          & 0.000 (0.000)   & 1.000 (0.000)     \\
        SC  & 0.009 (0.003)                          & 0.621 (0.015)                          & 0.000 (0.000)   & 0.208 (0.013)
        \\[1ex]
        \multicolumn{5}{c}{Case (b): $\delta$ small}                                                                                \\[1ex]
        OSL & 0.058 (0.007)                          & 0.939 (0.008)                          & 0.000 (0.000)   & 0.548 (0.016)     \\
        SL  & 0.058 (0.007)                          & 0.994 (0.002)                          & 0.000 (0.000)   & 1.000 (0.000)     \\
        SC  & 0.077 (0.008)                          & 0.899 (0.010)                          & 0.002 (0.001)   & 0.502 (0.016)     \\
        \bottomrule
      \end{tabular}                                                    \\
      \includegraphics[width=13cm,height=9cm]{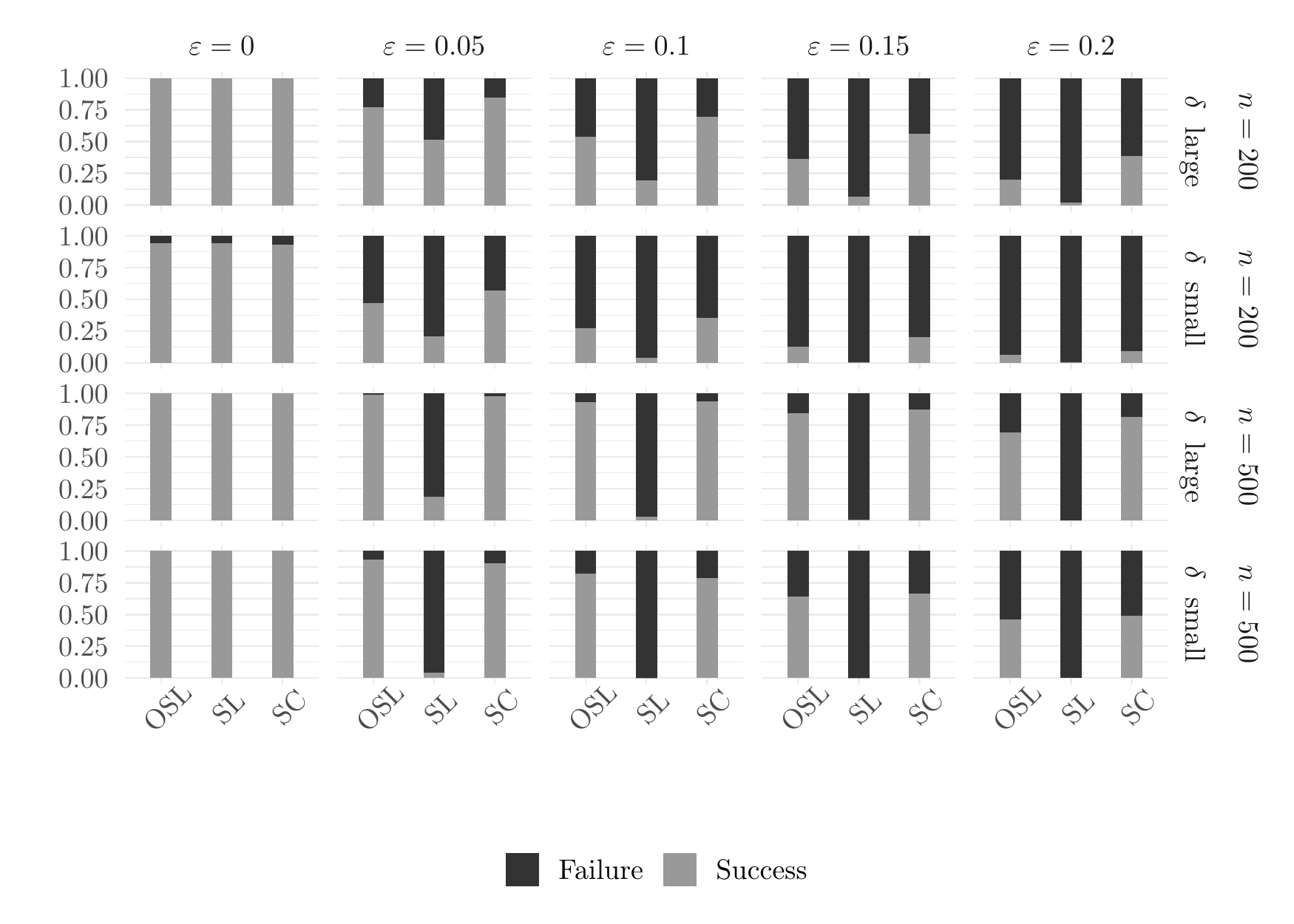} \\
    \end{tabular}	}
  \caption{Results in \textit{sine} model. From top to bottom :  a sample of $n=500$ observations with $\varepsilon=0.1$ and (a) $\delta=1.18$  and (b) $\delta=0.76$; a table and a barplot displaying the empirical estimate of the clustering risk according to $\varepsilon$, $n$ and $\delta$.}
  \label{fig:perfs_sinus}
\end{figure}}

First, as expected, the estimated clustering risk behaves as an increasing function of $\varepsilon$ in all experiments and for all clustering algorithms. Moreover, for a fixed value of $\varepsilon$, the clustering risk of all methods is higher when both the number $n$ of observations and the intergroup distance $\delta$ take small values. In all experiments, when there is no outlier (i.e. $\varepsilon =0$), the clustering risk of both SL and OSL is roughly zero.
This result is consistent with Theorem~\ref{theo:clustriskhatrn}, as well as with the results stated in~\cite{arias2011clustering} and~\cite{auklro15}. In these papers, the authors prove that, under assumptions close to \textbf{(A1)}-\textbf{(A6)} and when $\varepsilon=0$, SL is consistent and its clustering risk tends quickly to zero.
As discussed in Section~\ref{sec:SL}, in presence of outliers SL often fails to recover the true clusters and its clustering risk increases quickly as $\varepsilon$ grows. On the contrary, OSL seems less sensitive to the outlier proportion $\varepsilon$. In the numerical experiments, OSL always works better than SL when there are some outliers.

As expected, SC is well adapted to non-linearly separable groups but can work quite badly with compact groups \citep[see][]{nadler2007fundamental}. Contrary to SC, OSL is competitive in each experiment, and so it seems to perform well whatever the shape of the different groups. Moreover, observe that compared to SC, OSL and SL are exact in the sense that they do not require any random process (for instance the random starts used in SC).

Finally, Table~\ref{fig:computing_time} displays the computation time required by OSL and SC in the \textit{squares} model for various values of $n$. The computations have been carried out on a MacBook Pro, 2,4 GHz Intel Core i5 and 16Gb of RAM memory. Table~\ref{fig:computing_time} shows that OLS is substantially faster than SC as $n$ increases.

\begin{table}[h]
  \centering
  \renewcommand{\arraystretch}{1.1}
  \begin{tabular}{lcc}
    \toprule
    $n$   & OSL  & SC             \\
    \midrule
    100   & 1.41 & 0.10           \\
    200   & 1.56 & 0.38           \\
    500   & 2.09 & 3.86           \\
    1000  & 3.37 & 27.80          \\
    2000  & 3.27 & $2\times 10^3$ \\
    5000  & 3.94 & $4\times 10^4$ \\
    10000 & 6.68 & $3\times 10^5$ \\
    \bottomrule
  \end{tabular}
  \caption{Time complexity in seconds to perform OSL and SC as a function of $n$ in the \textit{squares} model.}
  \label{fig:computing_time}
\end{table}

\subsubsection{Performance according to the distribution of the outliers}

In the previous simulations, we only consider situations where $D=d$ and the outliers are uniformly distributed over their support. In this subsection, we will consider scenarios for which one or both conditions are not satisfied. We first address situations where $D\geq d$. To do so, we consider the tricky case of the sine model for many values of $D$ and $n$. For each couple of values $(D,n)$, outliers are uniformy generated in
\begin{equation*}
  [0,2\pi]^D \setminus \bigcup_{i=1}^MS_{i},
\end{equation*}
while the supports of the three groups remain unchanged (the dimension of the supports of the two compact groups is $2$ while that of the support of the sine group equals 1).
Table~\ref{tab:clust_risk_D_grand} displays the clustering risk of OSL according to $D$ and $n$.

\begin{table}[H]
  \centering
  \renewcommand{\arraystretch}{1.1}
  \begin{tabular}{cccccccccc}
    \toprule
            & \multicolumn{9}{c}{$D$}        \\
            \cmidrule{2-10}
            $n$ & 2     & 3     & 4     & 5     & 6     & 7     & 8     & 9     & 10    \\ 
    \midrule
    100                & 0.984 & 0.928 & 0.876 & 0.848 & 0.808 & 0.786 & 0.784 & 0.784 & 0.791 \\
    200                & 0.929 & 0.672 & 0.405 & 0.253 & 0.198 & 0.160 & 0.161 & 0.163 & 0.161 \\
    500                & 0.558 & 0.080 & 0.013 & 0.002 & 0.000 & 0.000 & 0.000 & 0.000 & 0.000 \\
    1000               & 0.153 & 0.000 & 0.000 & 0.000 & 0.000 & 0.000 & 0.000 & 0.000 & 0.000 \\
    \bottomrule
  \end{tabular}
  \caption{Clustering risk (averaged over 1000 replications) of OSL as a function of $D$ (columns) and $n$ (rows) for the sine model (tricky case with $\varepsilon=0.20$).}
  \label{tab:clust_risk_D_grand}
\end{table}
As proved in Theorem~\ref{theo:comb-th1-th2}, we observe a fast decrease of the risk when $D$ or $n$ increases.  Moreover, for small values of $n$ the risk reaches a minimum value which can be viewed as an ``optimal risk'', i.e. the lowest possible risk for these (small) numbers of observations. In this case, difficulties do not come from the outliers, but it is rather explained by the fact that we do not have enough observations to recover correctly the clusters.

We then address situations where the outliers are not uniformly distributed over their support $S_{0}$.  We consider again the tricky case of the sine model, but we assume that the outliers are densely distributed between the two squares. More precisely, $\prob_0$ is a Gaussian law with mean $(\pi,0)$ and covariance matrix with variances $2\sigma^2$ on the $x$-axis, $\sigma^2$ on the $y$-axis and correlation coefficient $\rho$. Many values for $\sigma^2$ and $\rho$ are considered. Figure~\ref{fig:sinus_bruit_gaussien} displays several examples. Note that $\prob_0$ is truncated in order to avoid that outliers fall into the supports of the clusters. Performances of OSL are given in Table~\ref{tab:clust_risk_bruit_gaussien} for $n=500$ and $\varepsilon=0.1$.
\begin{figure}
  \centering
  \includegraphics{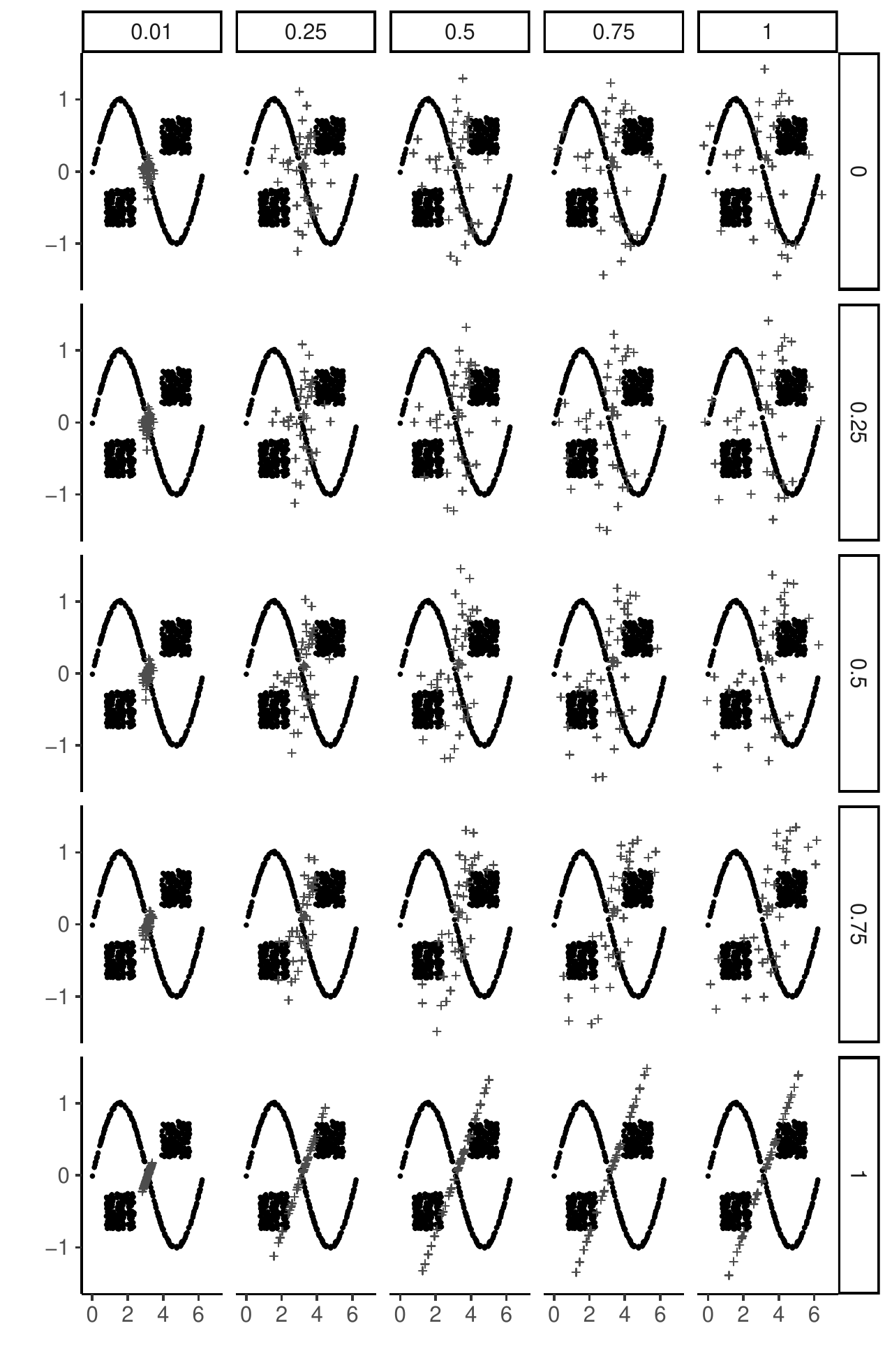}
  \caption{Samples with Gaussian noises in terms of $\sigma^2$ (colums) and $\rho$ (rows).}
  \label{fig:sinus_bruit_gaussien}
\end{figure}


\begin{table}[H]
  \centering
  \renewcommand{\arraystretch}{1.1}
  \begin{tabular}{cccccc}
    \toprule
            & \multicolumn{5}{c}{$\sigma^2$}        \\
            \cmidrule{2-6}
    $\rho$  & 0.01  & 0.25  & 0.5   & 0.75  & 1     \\
    \midrule
    0                             & 0.010 & 0.618 & 0.348 & 0.227 & 0.160 \\
    0.25                          & 0.015 & 0.683 & 0.419 & 0.257 & 0.169 \\
    0.5                           & 0.026 & 0.747 & 0.485 & 0.298 & 0.201 \\
    0.75                          & 0.029 & 0.847 & 0.562 & 0.408 & 0.311 \\
    1                             & 0.037 & 0.982 & 0.964 & 0.920 & 0.869 \\
    \bottomrule
  \end{tabular}
  \caption{Clustering risk (averaged over 1000 Monte Carlo replications) of OSL as a function of $\rho$ (rows) and $\sigma^2$ (columns) for the sine model (tricky case with $n=500$ and $\varepsilon=0.10$).}
  \label{tab:clust_risk_bruit_gaussien}
\end{table}

Some comments can be made about Table~\ref{tab:clust_risk_bruit_gaussien}.
First, we notice that the procedure is efficient for large values of $\sigma^2$ (except for the particular value $\rho=1$). This is simply because  large values of $\sigma^2$ provide sparse outliers, so that the procedure identifies correctly the three clusters. This is no longer the case when $\sigma^2$ decreases since the error term is then increasing. In particular, the algorithm is not efficient when $\sigma^2=0.25$. For this value, the distribution of the outliers is so dense between the square clusters that a path appears between these clusters and the procedure fails to correctly identify the groups.
Next, when $\sigma^2$ becomes much smaller, the error term becomes very small. Indeed, when $\sigma^2=0.01$, the outliers are gathered around the sine curve and prevent the two squares from being connected. In this case, the outliers are assigned to the group formed by the sine curve.
Since the clustering risk measures the ability of a statistical procedure to correctly group observations that belong to true clusters only (and ignores how outliers are assigned), this quantity is not affected by assigning outliers to a group. That's why, the error term remains small in this case.
Lastly, we can remark that the risk increases as $\rho$ becomes larger. Indeed, for large values of $\rho$, the outliers are gathered around a segment that connects the two squares. These two groups are thus connected, and the procedure fails with high probability.

\begin{rem}
  The last scenario with $\sigma^2=0.01$ needs to be discussed a bit further. Indeed, when $\sigma^2=0.01$, outliers are densely grouped around $(\pi,0)$ (see the first column in Figure~\ref{fig:sinus_bruit_gaussien}). So, it seems not easy to differentiate outliers from groups and one could consider that outliers define a group while the sine curve represents the outliers. However, based on the definition of our model, this is not the case. Indeed, let us recall that our model identifies groups in terms of both density in the supports (assumption \textbf{A3}) and cluster size (assumption \textbf{A6}). As the sine wave satisfies these two properties, it must define a group. So OLS procedure is correct when it identifies the sine curve as a true cluster.
\end{rem}

\begin{rem}
  Another issue is the robustness of the results depending on the choice of the number of clusters \citep[see][]{corhen17}. Since outliers may be interpreted as a true group in the last scenario when $\sigma^2=0.01$, we can be interested in the behavior of the algorithm for $M=4$. However, it is easy to see that the sampling design does not satisfy the model assumptions for $M=4$. Indeed, the supports of the sine curve and the outliers intersect so that it is no longer possible to identify 4 groups. Running OSL with $M=4$ thus identifies outliers as a true cluster while sine observations are considered as either observations of the fourth group or outliers. A correct scenario with $M=4$ can be obtained by adding a small separation between the sine group and the outliers. Figure~\ref{fig:sinus_bruit_gaussienM4} displays results of OSL with such a small gap for many values of $M$. For $M=3$, the algorithm actually identifies outliers as a true cluster while the two parts of the sine curve are considered as outliers. One part of the sine curve becomes the fourth group for $M=4$. The other part corresponds to the fifth group for $M=5$. Observe that no observations are identified as outliers for $M=5$.
\end{rem}

\begin{figure}
  \centering
  \includegraphics{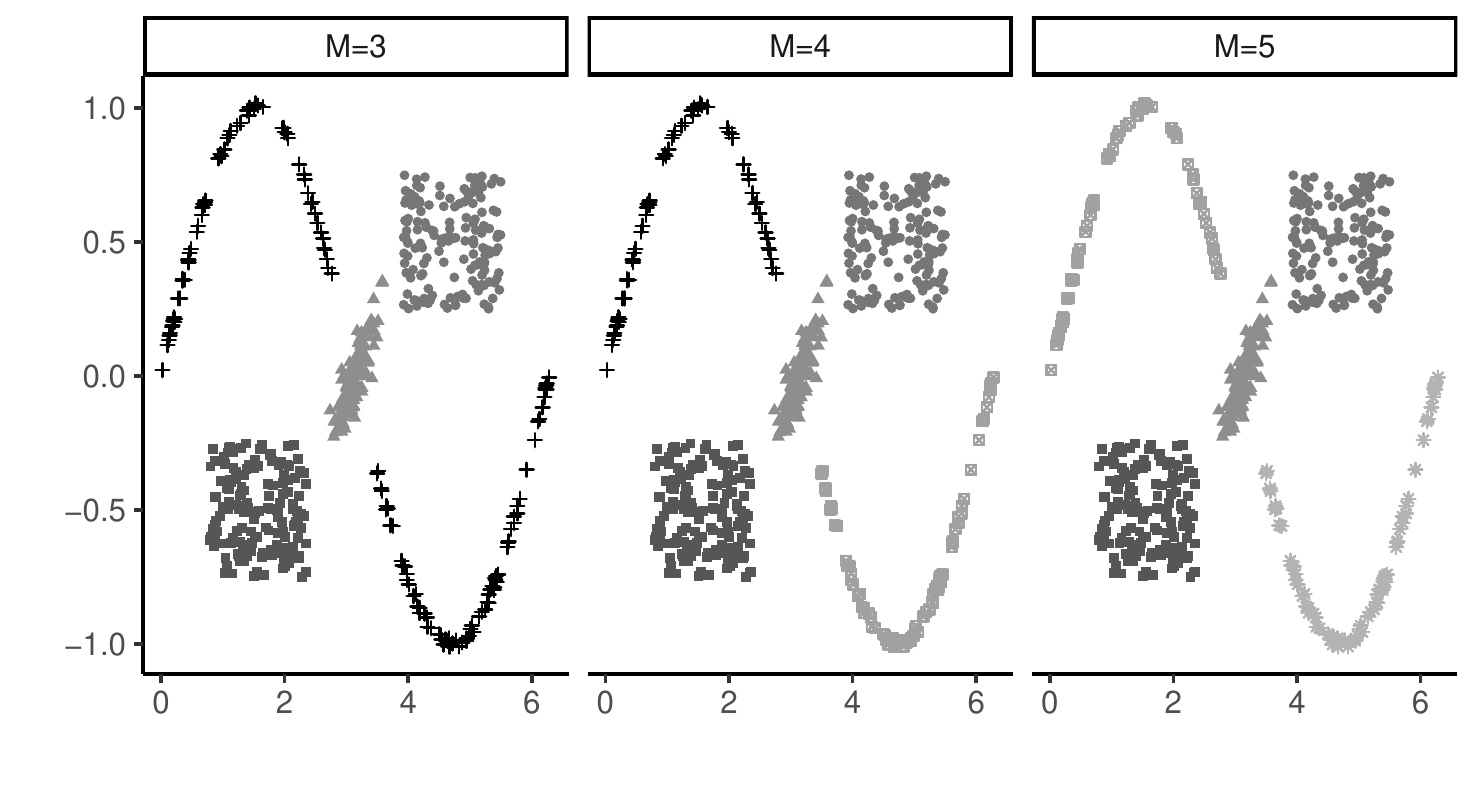}
  \caption{Results for many values of $M$ with $\varepsilon=0.2$, $\sigma^2=0.01$ and $\rho=0.85$. Outliers identified by OSL correspond to ``+''.}
  \label{fig:sinus_bruit_gaussienM4}
\end{figure}

\subsection{Comparison with several common clustering algorithms}

Here, OSL is compared with SL, SC and additional clustering algorithm namely the $k$-means algorithm (KMeans), the trimmed $k$-means (TKMeans), the density-based spatial clustering of applications with noise algorithm (DBSCAN) and the hierarchical density-based spatial clustering of applications with noise (HDBSCAN) based on three datasets from literature.

\subsubsection{Description of the data}

In the three datasets downloaded from \url{https://github.com/deric/clustering-benchmark}, data lie into subspace of $\mathbb R^2$ and groups have various shapes and various sizes. Moreover, the proportion and the distribution of outliers is not the same in the three datasets.

\begin{itemize}
  \item In the first scenario titled \textit{pathbased} and used in~\cite{chang2008robust}, the 300 observations belong to three disjoint groups with various shapes. The first two groups are compact with similar areas. There are surrounded by a third group with a ring-shape. In this scenario there is no outlier.

  \item The second dataset named \textit{cure-t2-4k}  \citep[see][]{van2017constraint,nerurkar2018empirical} includes 4200 observations. 4.7\% of observations are outliers and the remaining observations lie into 4 well-separated groups. The first three groups are circular with various areas: one is very large while the two others are small. The fourth group is defined by two similar circles joined by a segment. Outliers are distributed quite uniformly all around the groups.

  \item In the third dataset named \textit{compound}  \citep[see][]{zahn1971graph} there are 399 observations including 50 outliers (12.5\% of observations). Observations that are not outliers are distributed into 5 compact groups of various shapes. The three first groups are distinct and well separated while the last two groups with circular shape touch each other but do not overlap. Outliers are uniformly distributed only around a group and so lie into a subspace of the input space.

\end{itemize}

The three datasets are displayed in Figure~\ref{fig:perfs_sinus_ari}.
Observe that these three scenarios do not necesarily satisfy all the design conditions stated in Section~\ref{sec:main_results}.
Indeed, in the first scenario, the distance between the ring-shaped group and the two compact groups is almost zero. In the second scenario, two clusters are also not well separated and assumption \textbf{(A6)} is not satisfied: the largest group includes 39.6\% of observations while the smallest one includes only 4\% of observations. In the third dataset, assumption \textbf{(A6)} is also not satisfied: about 42\% of observations belong to the largest group while only  9.5\% of observations are in the smallest one.

\subsubsection{Comparison results}

For each scenario, 1000 Monte Carlo replications are used by sampling each time without replacement $75\%$ of the observations. The adjusted rand index (ARI) introduced by~\cite{hubert1985comparing} and the time complexity (TC) are estimated for each clustering algorithm and each Monte Carlo replication. Here, the ability of each algorithm to identify the groups is measured using ARI. This performance criterion is less strict than the clustering risk defined in equation~\eqref{eq:defclustrisk} in the sense that it is not a binary criterion that considers that a clustering procedure fails as soon as one observation that belongs to one true cluster has not been assigned to the right group.  ARI is a measure of similarity between two partitions. It has a value between 0 and 1, with 0 indicating that the two partitions do not agree on any pair of points and 1 indicating that the two partitions match perfectly. Here, for each clustering algorithm and each Monte Carlo replication, we compare the resulted partition with the true partition. Then, ARI represents the proportion of agreements over all the possible pairs of points between the resulted partition and the true one. This less strict criterion seems more adapted to the three considered datasets in which, as on many real applications, some clusters could not be well separated. So the definition of the true partition is not straightforward and several configurations could be considered for a same dataset, see for instance the descriptions of the \textit{pathbased} dataset and the \textit{compound} dataset above.

Implementation and calibration of OSL, SL and SC are the same as those used in the first simulation studies, see subsection~\ref{sec:simu1}. The function \texttt{stats::kmeans} is used to perform KMeans with 20 distinct random starts (parameter \texttt{nstart}). TKMeans is performed using the function \texttt{tclust::tkmeans} with 50 distinct random starts (the default value of parameter \texttt{nstart}). In TKMeans, the proportion $\alpha$ of trimmed observations is tuned using the true proportion of outliers $\varepsilon$: at each step, before updating the centers, the algorithm removed the top $\lceil \alpha n\rceil = \lceil \varepsilon n\rceil$ observations with the largest distance from its closest center.
DBSCAN and HDBSCAN are performed by using the functions \texttt{dbscan::dbscan} and \texttt{dbscan::hdbscan}. In DBSCAN, to calibrate the two main tunning parameters that are the radius of each neighborhood (\texttt{eps}) and the number of minimum points required for each neighborhood (\texttt{minPts}), we use the approach explained by~\cite{ester1996density}: for each scenario, \texttt{minPts} is set to its default value $4$ and \texttt{eps} is calibrated once on the entire dataset. For HDBSCAN, we fix the number of minimum points (\texttt{minPts}) at the minimal possible value, $2$, and we use the knowledge of $M$ to choose the final partition as we do with the algorithms OSL, SL, KMeans and SC. Note that in presence of outliers, this automatic calibration strategy might not be optimal for HDBSCAN and the selection of the final partition might be rather performed on each run in a non-automatic manner (for instance the study of the silhouette score).
As in the previous simulation study, all computations have been carried out on a MacBook Pro, 2.4\,GHz Intel Core i5 with 16\,Gb of RAM.

  {\small
    \begin{figure}[hp]

      \centering
      {\footnotesize
        \renewcommand{\arraystretch}{2}
        \begin{tabular}{c}
          \begin{tabular}{ccc}
            \textit{Pathbased} dataset                                 & \textit{Cure-t2-4k} dataset & \textit{Compound} dataset \\
            \includegraphics[width=3.7cm,height=3.7cm]{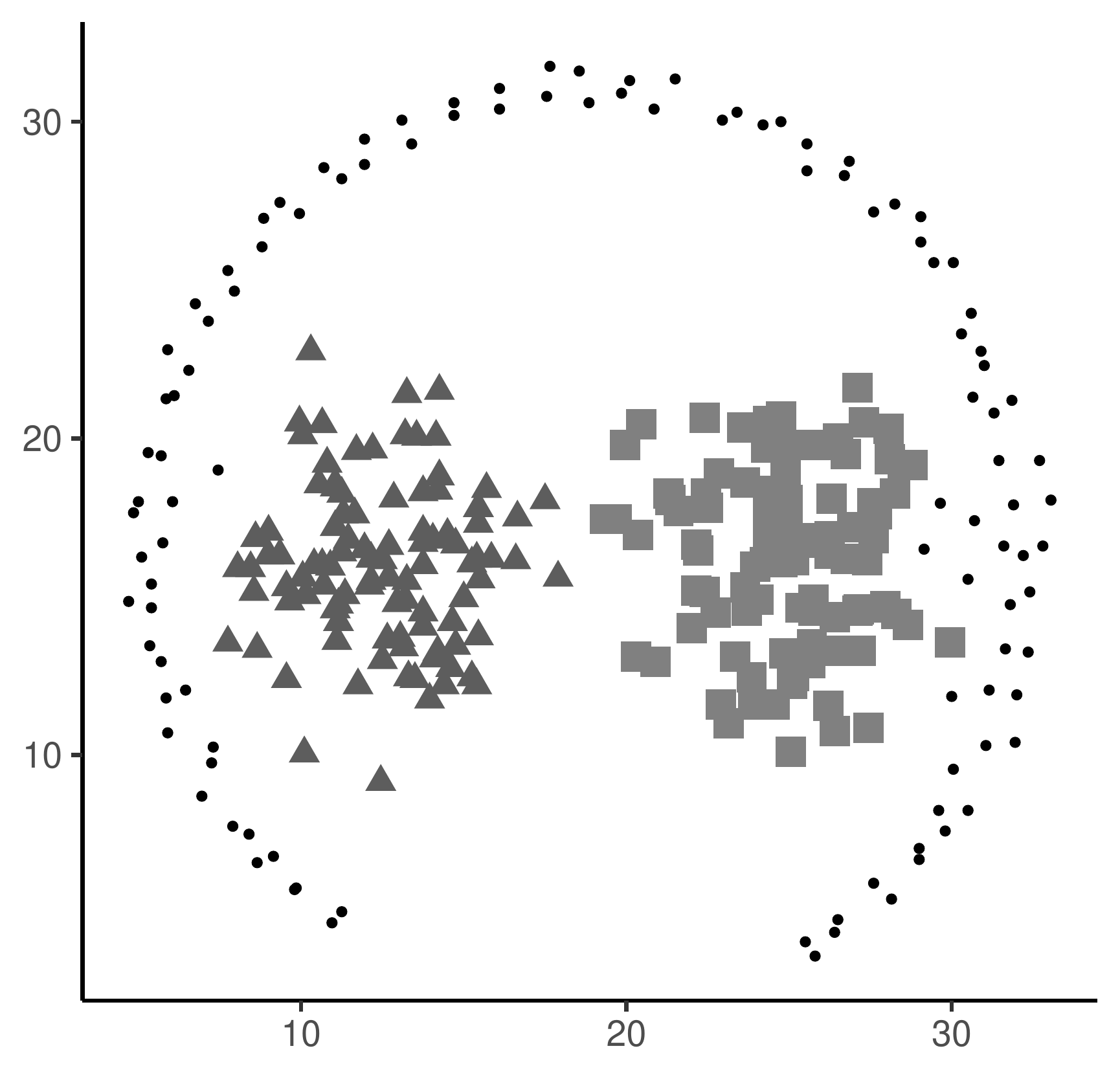}  &
            \includegraphics[width=3.7cm,height=3.7cm]{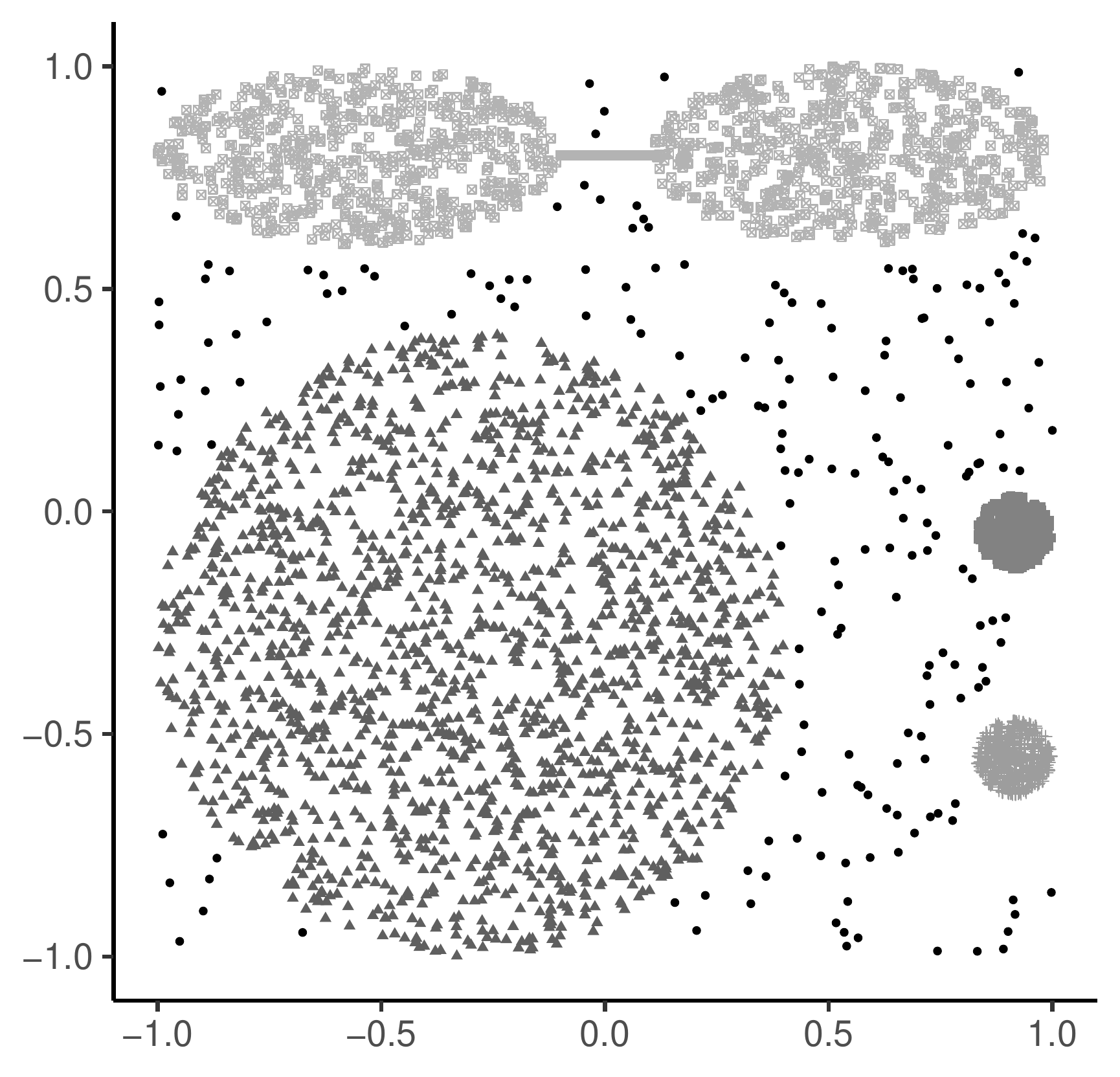} &
            \includegraphics[width=3.7cm,height=3.7cm]{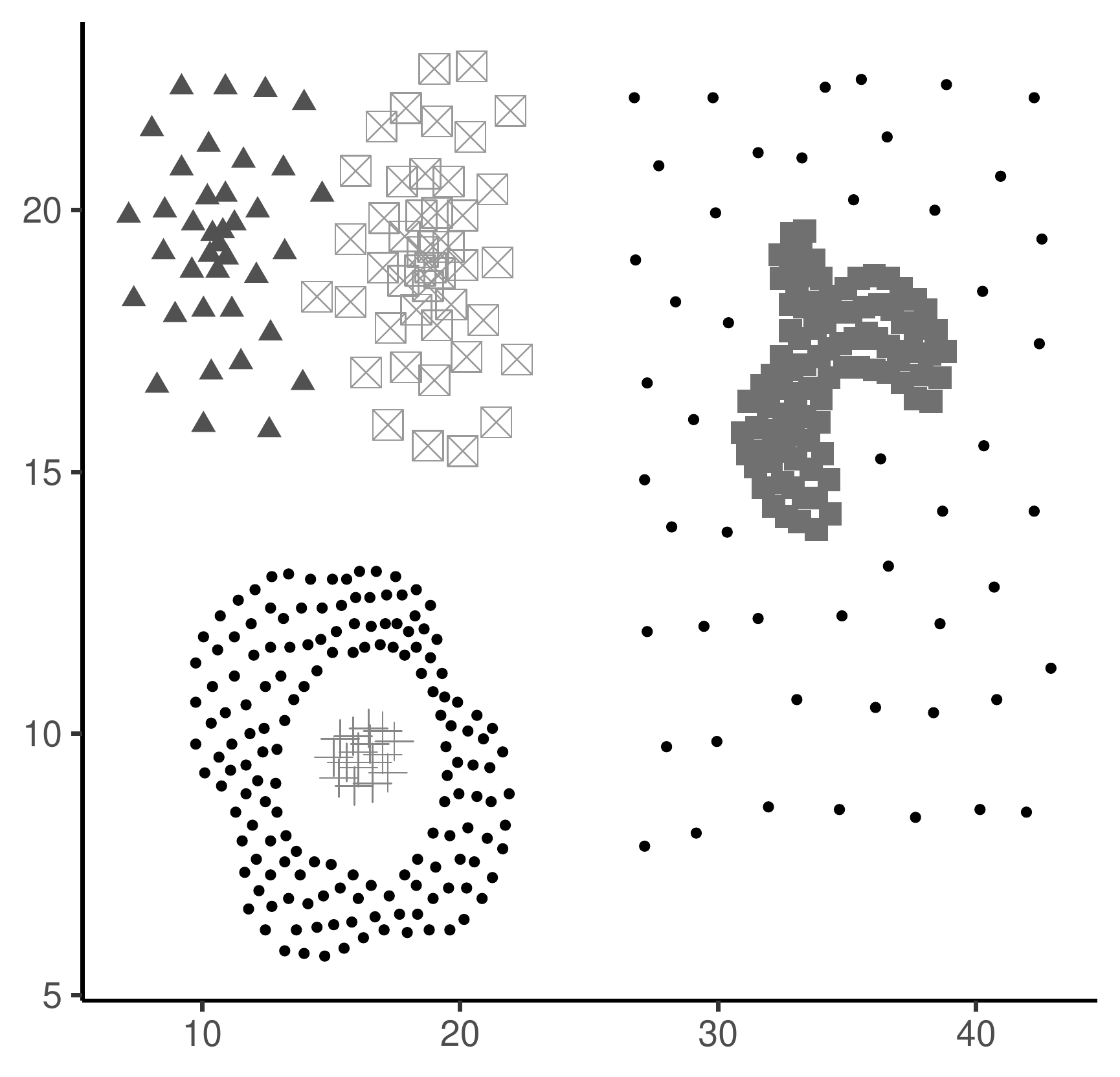}                                                    \\
          \end{tabular}
          \\
          \renewcommand{\arraystretch}{1.2}
          \begin{tabular}{p{4cm}cc}
            \toprule
                    & Adjusted rand index   & Running time (in sec.) \\
            \toprule
            \multicolumn{3}{c}{ \textit{Pathbased} dataset }         \\ [1ex]
            OSL     & \textbf{0.58} (0.16)  & 0.02 (0.01)            \\
            SL      & 0.07 (0.14)           & 5.9e-4  (8.1e-4)       \\
            SC      & 0.39 (0.19)           & 0.05 (0.01)            \\
            KMeans  & 0.46 (0.03)           & 1.8e-3 (1.6e-3)        \\
            TKMeans & \textbf{0.9} (0.04)   & 7.1e-3 (1.2e-3)        \\
            DBSCAN  & \textbf{0.6} (0.06)   & 3.8e-4 (8e-05)         \\
            HDBSCAN & 0.07 (0.14)           & 7.4e-3 (1.8e-3)        \\
                    &                       &                        \\
            \multicolumn{3}{c}{\textit{Cure-t2-4k} dataset}          \\[1ex]
            OSL     & \textbf{0.9} (0.13)   & 1.43 (0.07)            \\
            SL      & 0.02 (0.08)           & 0.06 (0.01)            \\
            SC      & \textbf{0.91} (0.09)  & 98.5 (3.6)             \\
            KMeans  & 0.54 (0.01)           & 0.03 (5.9e-3)          \\
            TKMEANS & 0.53 (0.01)           & 0.27 (0.01)            \\
            DBSCAN  & \textbf{0.96} (0.021) & 3.0e-3 (5.9e-4)        \\
            HDBSCAN & 0.02 (0.08)           & 0.82 (0.05)            \\
                    &                       &                        \\
            \multicolumn{3}{c}{\textit{Compound} dataset}            \\[1ex]
            OSL     & 0.48 (0.33)           & 0.02 (5e-3)            \\
            SL      & \textbf{0.69} (0.12)  & 9.5e-4 (4.1e-3)        \\
            SC      & \textbf{0.72} (0.11)  & 0.10 (9.4e-3)          \\
            KMeans  & 0.58 (0.04)           & 2.3e-3 (6.4e-4)        \\
            TKMEANS & 0.61 (0.02)           & 0.01 (1.3e-3)          \\
            DBSCAN  & 0.32 (0.08)           & 4e-04 (4e-05)          \\
            HDBSCAN & \textbf{0.69} (0.12)  & 8.3e-3 (1.2e-3)        \\
            \bottomrule
          \end{tabular} \\
        \end{tabular}	}
      \caption{\small Results on the \textit{Pathbased}, \textit{Compound} and \textit{Aggregation} datasets. For each parameter, the mean over the 1000 Monte Carlo replications is displayed with in brackets the standard error.}
      \label{fig:perfs_sinus_ari}
    \end{figure}}

Results are displayed in Figure~\ref{fig:perfs_sinus_ari}.
First, we can observe that OLS generally performs quite well to recover the true groups. The method is part of the top three algorithms in the two first scenarios. Note that in the two first scenarios, OLS outperforms SL even when there is no outlier.

In the \textit{Pathbased} dataset and particularly in the \textit{Compound} dataset, performance of OLS is affected by the fact that some clusters are not well separated and the distance between some clusters is almost zero. Indeed, for instance in the \textit{Compound} dataset, clustering performance of OLS improves greatly when we consider the partition in which the two groups that intersect are grouped together, see Figure~\ref{fig:perfs_sinus_ari_2}. Remark that in this case OLS outperforms all the other clustering algorithms.

Generally, our clustering method is thus competitive compared to the other algorithms. As previously noticed, OLS does not seem very sensitive to the group shape. As DSBCAN, OLS seems able to identify group completly surrounded by another group and seems quite robust toward outliers. Moreover, note that contrary to DBSCAN, OLS seems less sensitive to datasets with large density variations between groups (see for instance the \textit{Compound} dataset). Indeed, DBSCAN is very sensitive to the choice of its two main tunning parameters (\texttt{MinPts} and \texttt{eps}) and these two parameters cannot be chosen appropriately for all clusters when the density varies a lot between clusters.

Finally, OLS is compared to the other algorithms in terms of time complexity. OLS appears a bit less fast than SL, KMeans, TKMeans and DBSCAN. Nonetheless, the time complexity of our apporach remains raisonnable when $n$ is larger. Moreover, as peviously noticed (see Table~\ref{fig:computing_time}), OLS is faster than SC, especially when $n$ is large.

\begin{figure}[h]
  \begin{tabular}{cc}
    \begin{minipage}{0.48\linewidth}
      \includegraphics[width=6cm,height=5cm]{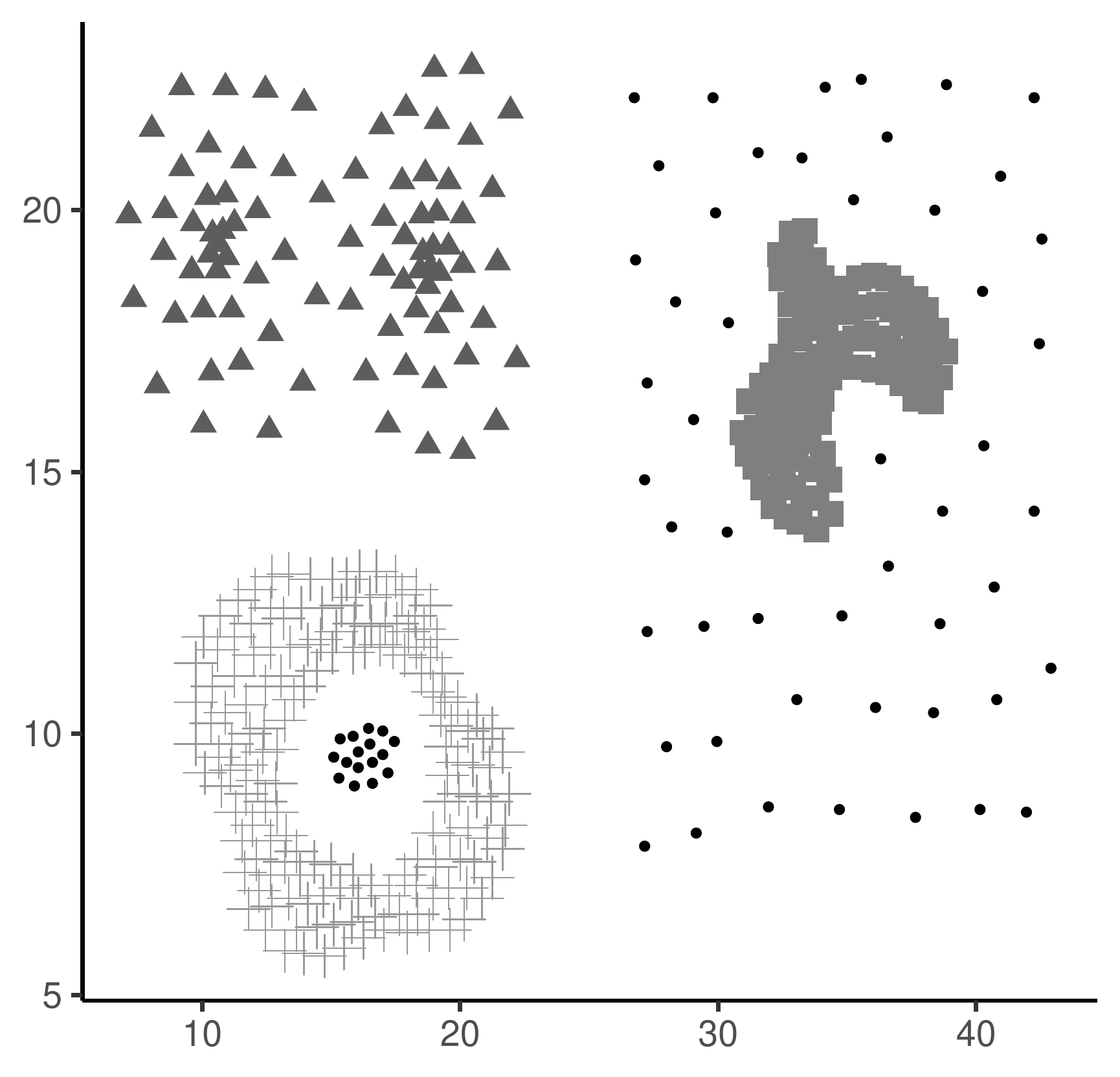}
    \end{minipage}
     &
    \begin{minipage}{0.48\linewidth}
      \begin{tabular}{p{2cm}cc}
        \toprule
                & Adjusted             & Running         \\
                & rand index           & time (s)        \\
        \toprule
        OSL     & \textbf{0.7} (0.22)  & 2.5e-3 (2.5e-3) \\
        SL      & 0.66 (0.16)          & 7.7e-4 (4.5e-4) \\
        SC      & \textbf{0.75} (0.03) & 0.10 (8.9e-3)   \\
        KMeans  & 0.65 (0.07)          & 2.2e-3 (4.3e-3) \\
        TKMEANS & \textbf{0.67} (0.08) & 0.01 (1e-3)     \\
        DBSCAN  & 0.35 (0.07)          & 4.1e-4 (4e-05)  \\
        HDBSCAN & 0.66 (0.16)          & 8.4e-4 (1.4e-3) \\

        \bottomrule
      \end{tabular}
    \end{minipage}
  \end{tabular}
  \caption{Results on the modified version of the \textit{Compound} datasets. For each parameter, the mean is displayed with in brackets the standard error.}
  \label{fig:perfs_sinus_ari_2}

\end{figure}

\section{Proofs}\label{sec:proofs}

\subsection{Technical lemmas}

\begin{lemma}
  \label{lem:within-cluster-connectivity}
  Fix $i=1,\dotsc,M$ and $0< r < \Delta_i$. Under {\bf (A1)-(A3)-(A4)}, there exists a positive constant $\Lambda_i$ such that
  \begin{equation*}
    \psi_{n,i}(r)=\prob(\mathbb{X}_n\cap S_{i}\text{ is not $r$-connected})\leq \Lambda_i r^{-d} \exp(-\constA nr^{d}).
  \end{equation*}
\end{lemma}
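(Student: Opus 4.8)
The plan is to reduce the event $\{\mathbb{X}_n\cap S_{i}\text{ is not }r\text{-connected}\}$ to a \emph{covering condition} and then combine a union bound with a Chernoff-type estimate. First I would fix a maximal $(r/4)$-packing $\{z_1,\dots,z_N\}$ of $S_{i}$, so the $z_j$ lie in $S_{i}$ and are pairwise at distance at least $r/4$; by maximality the balls $B(z_j,r/4)$ cover $S_{i}$. The key geometric observation is that \emph{if every $B(z_j,r/4)$ contains at least one point of $\mathbb{X}_n\cap S_{i}$, then $\mathbb{X}_n\cap S_{i}$ is $r$-connected}. Indeed, choosing $X_{m_j}\in B(z_j,r/4)\cap S_{i}$, the triangle inequality gives $B(z_j,r/4)\subseteq B(X_{m_j},r/2)$, whence $\bigcup_j B(z_j,r/4)\subseteq B(\mathbb{X}_n\cap S_{i},r/2)$; the left-hand union covers the connected set $S_{i}$ by balls centred on $S_{i}$ and is therefore itself connected, and each ball $B(X,r/2)$ with $X\in\mathbb{X}_n\cap S_{i}$ meets it at the point $X\in S_{i}$, so the whole neighbourhood $B(\mathbb{X}_n\cap S_{i},r/2)$ is connected. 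Passing to complements yields
\[
  \psi_{n,i}(r)\le\sum_{j=1}^{N}\prob\bigl(\mathbb{X}_n\cap S_{i}\cap B(z_j,r/4)=\emptyset\bigr).
\]

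Next I would estimate the two ingredients. For the covering number, the balls $B(z_j,r/8)$ are pairwise disjoint, each satisfies $\hausdorff^{s_i}(S_{i}\cap B(z_j,r/8))\ge\kappa_c^{-1}\eta(s_i)(r/8)^{s_i}$ by \textbf{(A4)}, and $\hausdorff^{s_i}(S_{i})\le\kappa_c\,\eta(s_i)\Delta_i^{s_i}$ (again \textbf{(A4)}, with radius $\Delta_i$); summing the lower bounds gives $N\le\kappa_c^{2}(8\Delta_i)^{s_i}r^{-s_i}$. For the emptiness probability, the mixture lower bound $\prob(\cdot)\ge(1-\varepsilon)\gamma_i\,\prob_i(\cdot)$ together with \textbf{(A3)} and \textbf{(A4)} yields, for each $j$, $\prob(X_1\in B(z_j,r/4)\cap S_{i})\ge p:=(1-\varepsilon)\gamma_i(\kappa_i\kappa_c)^{-1}\eta(s_i)(r/4)^{s_i}$. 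By independence each ball is empty with probability $(1-p_j)^n\le e^{-np}$, so
\[
  \psi_{n,i}(r)\le\kappa_c^{2}(8\Delta_i)^{s_i}\,r^{-s_i}\exp(-np).
\]

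It then remains to rewrite this in terms of $d$ and the global rate $\constA$. Restricting to the informative range $0<r<1$ (for $r\ge1$ the claim holds trivially after enlarging $\Lambda_i$, since $\psi_{n,i}\le1$), one has $r^{-s_i}\le r^{-d}$ and $(r/4)^{s_i}\ge 4^{-d}r^{d}$ because $s_i\le d$; invoking \textbf{(A6)} in the form $(1-\varepsilon)(1+\gbar)\ge1$ together with $\gamma_i\ge\gamma_*$, $\kappa_i\le\kappa^{*}$ and $\eta(s_i)\ge\eta_*(d)$ lower-bounds $p$ by a multiple of $\constA\,r^{d}$, after which the geometric prefactor is absorbed into $\Lambda_i$. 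The main obstacle is the geometric reduction of the first step (establishing that nonempty covering balls force $r$-connectivity, and that the covering union is connected); secondarily, and more delicately, the bookkeeping that passes from the intrinsic exponent $s_i$ and the covering radius $r/4$ to the \emph{precise} rate $\constA n r^{d}$ of the statement: the crude ``every ball nonempty'' argument costs an absolute factor (here $4^{-d}$) in the exponent, so recovering exactly $\constA$ requires optimising the covering radius or sharpening the connectivity reduction, which is where the inequality $s_i\le d$ and the choice of radius must be tracked carefully.
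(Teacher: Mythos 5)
Your proposal is correct and follows the same overall strategy as the paper: cover $S_{i}$ by $O(r^{-d})$ balls of radius $r/4$ centred on $S_{i}$, observe that if every ball meets $\mathbb{X}_n\cap S_{i}$ then $B(\mathbb{X}_n\cap S_{i},r/2)$ is connected, and conclude by a union bound plus the per-ball emptiness estimate $(1-p)^n\le e^{-np}$ with $p$ controlled via the mixture decomposition, \textbf{(A3)} and \textbf{(A4)}. The one genuine difference is how you bound the covering number: you use a maximal $(r/4)$-packing and the two-sided volume bounds of \textbf{(A4)} to get $N\le\kappa_c^{2}(8\Delta_i/r)^{s_i}$, whereas the paper invokes \textbf{(A5)} (equality of Hausdorff and Minkowski--Bouligand dimensions) to bound $N_{r}^{*}(S_{i})$. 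Your route is arguably preferable here, since it keeps the proof within the hypotheses \textbf{(A1)}, \textbf{(A3)}, \textbf{(A4)} actually listed in the lemma, while the paper's proof silently uses \textbf{(A5)} (and, like yours, also \textbf{(A6)} when replacing $1-\varepsilon$ by $(1+\gbar)^{-1}$ in $\constA$). Your closing worry about the factor $4^{-s_i}$ in the exponent is legitimate but should not block you: the paper's own proof drops exactly that factor when applying \textbf{(A4)} to balls of radius $r/4$, so the constant $\constA$ as written is only correct up to such an absolute factor, and since $\constA$ merely names a constant appearing in the theorems this is harmless. Two small caveats: your connectivity argument (correctly) needs each covering ball to intersect the connected set $S_{i}$, which your choice of centres guarantees; and for $1\le r<\Delta_i$ with $s_i<d$ the passage from $r^{s_i}$ to $r^{d}$ in the exponent cannot be repaired by enlarging $\Lambda_i$ alone --- but this wrinkle is present in the paper's proof as well and is immaterial in the regime $r\to 0$ where the lemma is used.
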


\begin{proof}[\textbf{\upshape Proof:}]
  We denote by $N_r^*(S_{i})$ the minimal number of balls of radius $r>0$, centered at points of $S_{i}$, required to cover $S_{i}$. Note that, for any $r>0$ we have by definition $N_r(S_{i}) \leq N_r^*(S_{i})$. Moreover, using triangle inequality we also have $N_r^*(S_{i}) \leq N_{r/2}(S_{i})$. Using Assumption~\textbf{(A5)}, this implies that
  \begin{equation*}
    s_i = \lim_{r\to 0} \frac{\log(N_r^*(S_{i}))}{\log(1/r)}.
  \end{equation*}
  Thus, there exists a positive constant $\Lambda_i$ (that depends on $S_{i}$) such that, for any $0<r<\Delta_i$ we have:
  \begin{equation*}
    N_r^*(S_{i})
    \leq 4^{-d}\Lambda_i r^{-s_i}
    \leq 4^{-d}\Lambda_i r^{-d}.
  \end{equation*}
  This implies that there exist both an index set $\mathcal{L}_i$, whose cardinality is bounded above by $\Lambda_i r^{-d}$, and a family of balls $(B_{\ell})_{\ell\in\mathcal L_i}$ centered at points that belong to $S_{i}$,  with radius $r/4$, which satisfy:
  \begin{equation*}
    S_{i} \subset \bigcup_{\ell\in\mathcal{L}_i} B_\ell.
  \end{equation*}
  Since, for any $\ell\in\mathcal L_i$, we have $(\mathbb{X}_n\cap S_{i})\cap B_\ell\neq \emptyset$, there exists $\alpha_\ell\in\In$ such that $X_{\alpha_\ell}\in B_{\ell}\cap S_{i}$. Using triangle inequality, this implies that $B(X_{\alpha_\ell},r/2)\supset B_\ell$. Thus,
  \begin{equation*}
    \mathbb{X}_n\cap S_{i} \subset \bigcup_{\ell\in\mathcal{L}_i} B(X_{\alpha_\ell}, r/2)
    \quad\text{with}\quad X_{\alpha_\ell} \in \mathbb{X}_n\cap S_{i}.
  \end{equation*}
  We deduce that $\mathbb{X}_n\cap S_{i}$ is $r$-connected and
\begin{align*}
  \psi_{n,i}(r)
   & \leq \prob\left(
  \exists \ell\in\mathcal{L}_i, B_{\ell}\cap (\mathbb{X}_n\cap
  S_{i})=\emptyset
  \right)                                                                                            \\
   & \le \prob\left(
  \exists \ell\in\mathcal{L}_i, \; \forall k\in {\In},
  X_{k}\notin S_{i} \text{ or } (X_{k}\in S_{i}, X_{k}\notin B_{\ell})
  \right)                                                                                            \\
   & \leq \sum_{\ell\in\mathcal{L}_i} \big(
  \prob(X\notin S_{i}) + \prob(X\notin B_{\ell} \mid X\in
  S_{i})\prob(X \in S_{i})
  \big)^{n}                                                                                          \\
   & \leq \sum_{\ell\in\mathcal L_i} \big(
  1 - \prob(X\in B_{\ell} \mid X\in S_{i}) \prob(X\in S_{i})
  \big)^{n}                                                                                          \\
   & \leq \sum_{\ell\in\mathcal L_i} \big(1-(1-\varepsilon)\gamma_i \prob_{i}(X\in B_{\ell})\big)^n.
\end{align*}
Moreover,
\begin{align*}
  \prob_{i}(X\in B_{\ell}) & =\prob_i(X\in B_\ell\cap S_{i})                           & \text{from {\bf (A1)}} \\
                           & \geq \kappa_i^{-1} \hausdorff^{s_{i}}(B_{\ell}\cap S_{i}) & \text{from {\bf (A3)}} \\
                           & \geq (\kappa_i\kappa_c)^{-1}\eta(s_{i}) r^{s_{i}}           & \text{from {\bf (A4)}} \\
                           & \geq (\kappa^*\kappa_c)^{-1} \eta_{*}(d) r^{d},             &
\end{align*}
where $\kappa^*$ and $\eta_{*}(d)$ are defined by~\eqref{eq:kappa-eta-star}. Putting all pieces together we obtain
\begin{align*}
  \psi_{n,i}(r) & \leq |\mathcal L_i|
  \big( 1-(1- \varepsilon)\gamma_*(\kappa^*\kappa_c)^{-1} \eta_{*}(d) r^{d} \big)^{n} \\
                & \leq \Lambda_i r^{-d} \exp(-\constA nr^{d}),
\end{align*}
where $\constA$ is defined in~\eqref{eq:def-constAB}.
\end{proof}
\begin{lemma}
  \label{lem:between-cluster-connectivity}
  Let $r>0$ and denote by $\varphi_{n}(m, r)$ the probability that
  there exists, in $S_{0}$, a path of at least $m$ $r$-connected observations. If assumptions {\bf (A1)} and {\bf (A2)} hold, we have
  \begin{equation*}
    \varphi_{n}(m,r) \leq n\varepsilon (\constB n \varepsilon r^{\dd})^{m-1},
  \end{equation*}
  where $\constB$ is defined in~\eqref{eq:def-constAB}.
\end{lemma}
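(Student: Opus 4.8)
The plan is to control $\varphi_n(m,r)$ by a first-moment (union bound) argument over all candidate paths. I interpret a ``path of $m$ $r$-connected observations in $S_{0}$'' as an ordered tuple of distinct indices $(i_1,\dots,i_m)$ with $X_{i_j}\in S_{0}$ for every $j$ and $\|X_{i_j}-X_{i_{j+1}}\|\le r$ for every $j\in\{1,\dots,m-1\}$. The number of such ordered tuples is at most $n^m$, and since $X_1,\dots,X_n$ are i.i.d.\ each tuple contributes the same probability; writing $p_m$ for the probability attached to the tuple $(1,\dots,m)$, the union bound gives $\varphi_n(m,r)\le n^m p_m$.

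Next I would isolate the single-step estimate that drives the whole argument. For any fixed $x\in\R^\dd$, the decomposition~\eqref{eq:def_P} together with the fact that each $S_{i}$ is disjoint from $S_{0}$ (so that $\prob_i(S_{0}\cap B(x,r))=0$) yields $\prob(X\in S_{0}\cap B(x,r))=\varepsilon\,\prob_0(S_{0}\cap B(x,r))$. Applying Assumption~\textbf{(A2)} and bounding the Hausdorff (i.e.\ Lebesgue) measure of the ball by its volume, I obtain
\[
  \prob(X\in S_{0}\cap B(x,r)) \le \varepsilon\,\kappa_0\,\hausdorff^{\dd}(B(x,r)) = \varepsilon\,\kappa_0\,\eta(\dd)\,r^{\dd} = \varepsilon\,\constB\,r^{\dd},
\]
a bound that is uniform in $x$. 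This uniformity is exactly what will make the recursion telescope.

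I would then peel the endpoints of the path one vertex at a time. Conditioning on $X_1,\dots,X_{m-1}$ and integrating out $X_m$, the only constraints on $X_m$ are $X_m\in S_{0}$ and $\|X_m-X_{m-1}\|\le r$, whose conditional probability is at most $\varepsilon\constB r^{\dd}$ by the previous display, independently of the realized value of $X_{m-1}$. Hence $p_m\le \varepsilon\constB r^{\dd}\,p_{m-1}$, and iterating down to the base case $p_1=\prob(X\in S_{0})=\varepsilon$ gives $p_m\le \varepsilon(\varepsilon\constB r^{\dd})^{m-1}$. Combining with the union bound,
\[
  \varphi_n(m,r)\le n^m\varepsilon(\varepsilon\constB r^{\dd})^{m-1} = n\varepsilon(\constB n\varepsilon r^{\dd})^{m-1},
\]
which is the claimed inequality.

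There is no serious analytic obstacle here; the argument is essentially a union bound plus an iterated conditioning. The only points requiring care are definitional and of bookkeeping nature: first, making precise that a path corresponds to an ordered tuple of distinct indices so that the counting factor $n^m$ is legitimate (requiring distinctness only lowers the count, so it is harmless for an upper bound, as is counting each undirected path in both orientations); and second, verifying that the single-step probability bound is genuinely \emph{uniform} in the conditioning variable, since otherwise the recursion $p_m\le\varepsilon\constB r^{\dd}p_{m-1}$ would fail to close. Assumption~\textbf{(A2)} is the crucial ingredient, as it controls $\prob_0$ on small balls by their volume, while Assumption~\textbf{(A1)} enters only through the disjointness of the supports that produces the identity $\prob(X\in S_{0}\cap\,\cdot\,)=\varepsilon\,\prob_0(\cdot)$.
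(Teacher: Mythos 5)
Your proposal is correct and follows essentially the same route as the paper: a union bound over candidate paths (the paper sums over unordered index sets and multiplies by $m!$ for the orderings, giving $n!/(n-m)!\le n^m$, which matches your count of ordered tuples), followed by peeling off one endpoint at a time using the uniform single-ball bound $\varepsilon\kappa_0\eta(\dd)r^{\dd}$ from Assumption \textbf{(A2)} — the paper phrases this last step via Fubini rather than iterated conditioning, but it is the same computation. No gaps.
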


\begin{proof}[\textbf{\upshape Proof:}]
  Fix $r>0$. For
  any $I\subseteq\In$ we denote by $\mathcal A_{I}$ the following event:
  there exists a permutation $i_{1}<\dotsc<i_{m}$ of $I$ such that
  $\|X_{i_{j}}-X_{i_{j+1}}\|\leq r$ for any $j=1,\dotsc,m-1$. We have:
  \begin{align*}
    \varphi_{n}(m,r)
     & \leq \sum_{\substack{I\subseteq\In \\|I|=m}}
    {\prob}(\mathcal A_{I}\cap\{X_{I}\subseteq
    S_{0}\})                            \\
     & \leq \sum_{\substack{I\subseteq\In \\|I|=m}} \varepsilon^{m}
    {\prob_{0}}(\mathcal A_{I}).
  \end{align*}
  Now remark that
  \begin{align*}
  {\prob_{0}}(\mathcal A_{I})
   & \leq m!{\esp_{0}}\left(\prod_{j=1}^{m-1}\1_{[0,
  r]}(\|X_{i_{j}}-X_{i_{j+1}}\|)\right)                                       \\
   & = m!\int_{S_{0}}\cdots\int_{S_{0}} \1_{[0,r]}(\|x_{1}-x_{2}\|)\dotsc
  \1_{[0,r]}(\|x_{m-1}-x_{m}\|)
  \mathrm{d}{\prob_{0}}(x_{1},\dotsc,x_{m}).
\end{align*}
  Note also that, using \textbf{(A2)}:
  \begin{equation*}
    \int_{S_{0}} \1_{[0,r]}(\|x-y\|) \mathrm{d}\prob_{0}(y)
    \leq \prob_{0}(B(x, r)) \leq \kappa_0 \hausdorff^{\dd}(B(x,r)) =
    \kappa_0 \eta(\dd) r^{\dd}.
  \end{equation*}
  This, combined with Fubini's theorem implies that:
  \begin{align*}
    {\prob_{0}}(\mathcal A_{I})
     & \leq m! (\eta(\dd)\kappa_0 r^{\dd})^{{m-1}}.
  \end{align*}
  Finally, we obtain:
  \begin{align*}
  \varphi_{n}(m,r)
   & \leq \frac{n!}{(n-m)!} \varepsilon^{m} (\eta(\dd)\kappa_0 r^{\dd})^{m-1} \\
   & \leq n\varepsilon (\constB \varepsilon n r^{\dd})^{m-1}.
\end{align*}
\end{proof}

\begin{lemma}
  \label{lem:probomegabar}
  Assume that assumptions {\bf (A1)} and {\bf (A6)} hold. Define $N_i=|\mathbb{X}_n\cap S_{i}|,i\in\intervalb{0}{M}$ and for $0<\eta\leq \eta_0$ let
  \begin{equation*}
    \Omega_\eta=\bigcap_{i=1}^M\left\{(1-\eta)(1-\varepsilon)\gamma_in<N_i<(1+\eta)(1-\varepsilon)\gamma_in\right\}.
  \end{equation*}
  We have
  \begin{itemize}
    \item[(i)] $\prob(\overline{\Omega_\eta})\leq 2M\exp(-\psi(\eta)(1-\varepsilon)\gbar n)$;
    \item[(ii)] $N_0<\frac{\gbar}{\gamma_*}\min_{i\in\interval{M}}N_i<\min_{i\in\interval{M}}N_i$ under $\Omega_\eta$.
  \end{itemize}
\end{lemma}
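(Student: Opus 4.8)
The plan is to identify the counts $N_i$ as binomial random variables and then treat the two parts separately: part (i) is a concentration statement handled by Chernoff bounds together with a union bound, while part (ii) is a deterministic consequence of lying inside $\Omega_\eta$ combined with the precise calibration of $\eta_0$.

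First I would record the relevant distributions. Since the supports $S_1,\dots,S_M$ are pairwise disjoint by \textbf{(A1)} and $\prob_0(S_i)=0$ for $i\geq1$, each observation $X_k$ falls in $S_i$ (for $i\in\interval{M}$) with probability $(1-\varepsilon)\gamma_i$ and in $S_0$ with probability $\varepsilon$. Consequently each $N_i\sim\mathrm{Bin}(n,(1-\varepsilon)\gamma_i)$ has mean $\mu_i=(1-\varepsilon)\gamma_i n$, and, because the $S_i$ for $i\in\intervalb{0}{M}$ partition $[0,1]^\dd$, we have the exact identity $N_0+\sum_{i=1}^M N_i=n$ almost surely. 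Note that $\Omega_\eta$ involves only $N_1,\dots,N_M$, so I never need the distribution of $N_0$ itself.

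For part (i), I would apply the classical multiplicative Chernoff bounds to each $N_i$: the upper tail gives $\prob(N_i\geq(1+\eta)\mu_i)\leq\exp(-\mu_i\psi(\eta))$ with exactly the rate $\psi(\eta)=(1+\eta)\log(1+\eta)-\eta$, and the lower tail gives $\prob(N_i\leq(1-\eta)\mu_i)\leq\exp(-\mu_i\psi_-(\eta))$ with $\psi_-(\eta)=(1-\eta)\log(1-\eta)+\eta$. The single point requiring care is that I want one common exponent: I would invoke the elementary inequality $\psi_-(\eta)\geq\psi(\eta)$ on $(0,1)$ (checked by noting both sides vanish at $\eta=0$ and that the derivative of the difference is $-\log(1-\eta^2)\geq0$), so that both tails are bounded by $\exp(-\mu_i\psi(\eta))$. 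A union bound then yields $\prob(\overline{\Omega_\eta})\leq\sum_{i=1}^M 2\exp(-\mu_i\psi(\eta))$, and since $\gamma_i\geq\gamma_*>\gbar$ (the latter because $\gbar=\gamma_*-\gamma^*/2<\gamma_*$) and $\psi(\eta)>0$, each summand is at most $\exp(-\psi(\eta)(1-\varepsilon)\gbar n)$, which gives (i).

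For part (ii), the second inequality $\frac{\gbar}{\gamma_*}\min_i N_i<\min_i N_i$ is immediate from $\gbar<\gamma_*$. For the first inequality I would work entirely on $\Omega_\eta$: using $N_0=n-\sum_{i=1}^M N_i$ with the lower bounds $N_i>(1-\eta)(1-\varepsilon)\gamma_i n$ and $\sum_i\gamma_i=1$ gives $N_0<n[1-(1-\eta)(1-\varepsilon)]$, while simultaneously $\min_i N_i>(1-\eta)(1-\varepsilon)\gamma_* n$. Thus it suffices to verify $1-(1-\eta)(1-\varepsilon)\leq\gbar(1-\eta)(1-\varepsilon)$, equivalently $(1-\eta)(1-\varepsilon)\geq(1+\gbar)^{-1}$. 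This is exactly where the definition of $\eta_0$ enters: from $\eta_0=1-[(1-\varepsilon)(1+\gbar)]^{-1}$ one gets $(1-\eta_0)(1-\varepsilon)=(1+\gbar)^{-1}$, so $\eta\leq\eta_0$ forces $(1-\eta)(1-\varepsilon)\geq(1+\gbar)^{-1}$, and the chain $N_0<n[1-(1-\eta)(1-\varepsilon)]\leq\gbar(1-\eta)(1-\varepsilon)n<\frac{\gbar}{\gamma_*}\min_i N_i$ closes with strict inequality on both ends. The main obstacle here is not any single estimate but the two calibration facts that make the bookkeeping work: the comparison $\psi_-\geq\psi$ between the lower- and upper-tail exponents, and the observation that $\eta_0$ is defined precisely so that $(1-\eta_0)(1-\varepsilon)=(1+\gbar)^{-1}$, which is what converts the size-separation condition of \textbf{(A6)} into the strict domination $N_0<\min_i N_i$.
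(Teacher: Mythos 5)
Your proof is correct and follows essentially the same route as the paper: part (ii) is line-for-line the paper's computation (bound $N_0$ by $n-\sum_i N_i$, use the lower bounds on the $N_i$, and exploit the calibration $(1-\eta_0)(1-\varepsilon)=(1+\gbar)^{-1}$), while for part (i) the paper simply cites a binomial tail inequality from Shorack and Wellner where you derive it explicitly via the two multiplicative Chernoff bounds and the comparison $\psi_-\geq\psi$. The only cosmetic slip is that each summand in your union bound is $2\exp(-\mu_i\psi(\eta))$, not $\exp(-\mu_i\psi(\eta))$, which is exactly what produces the factor $2M$ in the stated bound.
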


\begin{proof}[\textbf{\upshape Proof:}] Since $N_i\sim B(n,(1-\varepsilon)\gamma_i)$, (i) is a direct consequence of~\citet[page 440]{showell86}. For (ii), observe that $(1-\varepsilon)(1-\eta_0)=1/(1+\gbar)$. Since $0<\eta\leq\eta_0$, it follows that
  \begin{equation*}
    1-(1-\varepsilon)(1-\eta)\leq (1-\varepsilon)(1-\eta)\gbar.
  \end{equation*}
  Thus, under $\Omega_\eta$,
  \begin{align*}
  N_0
   & \leq n-\sum_{i=1}^MN_i\leq n\left(1-(1-\varepsilon)(1-\eta)\sum_{i=1}^M\gamma_i\right)                              \\
   & \leq n(1-(1-\varepsilon)(1-\eta)) \leq n(1-\varepsilon)(1-\eta)\gbar                                                \\
   & \leq \frac{\gbar}{\gamma_*}\,n(1-\varepsilon)(1-\eta)\gamma_i<\frac{\gbar}{\gamma_*}\,N_i\quad\forall i=1,\dotsc,M.
\end{align*}
\end{proof}

\begin{lemma}
  \label{lem:borneetagamma}
  Assume that assumption {\bf (A6)} holds. For each $\eta\leq\min(\eta_0,\eta_1)$ we have
  \begin{equation*}
    \frac{1+\eta}{1-\eta}\frac{\gamma^*}{\gamma_*}+\frac{\gbar}{\gamma_*}\leq 2.
  \end{equation*}
\end{lemma}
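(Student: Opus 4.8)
The plan is to reduce the claimed inequality to a one-variable statement in $\eta$ and then invoke the defining relation of $\eta_1$ together with Assumption \textbf{(A6)}. First I would rewrite the constant term using $\gbar=\gamma_*-\gamma^*/2$, so that $\gbar/\gamma_*=1-\gamma^*/(2\gamma_*)$. Substituting this into the left-hand side, the target inequality $\frac{1+\eta}{1-\eta}\frac{\gamma^*}{\gamma_*}+\frac{\gbar}{\gamma_*}\le 2$ is equivalent to $\frac{\gamma^*}{\gamma_*}\bigl(\frac{1+\eta}{1-\eta}-\frac12\bigr)\le 1$. A short computation gives $\frac{1+\eta}{1-\eta}-\frac12=\frac{1+3\eta}{2(1-\eta)}$, so it suffices to prove $\frac{1+3\eta}{1-\eta}\le\frac{2\gamma_*}{\gamma^*}$ for all admissible $\eta$.

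Next I would observe that the map $\eta\mapsto\frac{1+3\eta}{1-\eta}$ is strictly increasing on $[0,1)$, since its derivative equals $4/(1-\eta)^2>0$. Because the bound is only needed for $\eta\le\min(\eta_0,\eta_1)\le\eta_1$, and because the defining relation forces $\eta_1\in(0,1)$ (solving $\frac{4\eta_1}{1-\eta_1}=\frac{\gamma_*}{\gamma^*}-\frac12$ yields $\eta_1=(\gamma_*/\gamma^*-\tfrac12)/(\tfrac72+\gamma_*/\gamma^*)$, which lies strictly between $0$ and $1$ whenever $\gamma^*<2\gamma_*$), it is enough to verify the inequality at the endpoint $\eta=\eta_1$.

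The key step is then to evaluate the left-hand side at $\eta_1$. Writing $\frac{1+3\eta}{1-\eta}=1+\frac{4\eta}{1-\eta}$ makes the quantity line up exactly with the defining relation $\frac{4\eta_1}{1-\eta_1}=\frac{\gamma_*}{\gamma^*}-\frac12$, so that $\frac{1+3\eta_1}{1-\eta_1}=\frac12+\frac{\gamma_*}{\gamma^*}$. The inequality at $\eta_1$ therefore reduces to $\frac12+\frac{\gamma_*}{\gamma^*}\le\frac{2\gamma_*}{\gamma^*}$, i.e. $\frac12\le\frac{\gamma_*}{\gamma^*}$, which is precisely the condition $\gamma^*\le 2\gamma_*$ guaranteed (in fact strictly) by \textbf{(A6)}. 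Combined with the monotonicity reduction, this closes the argument.

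The computation is elementary, so there is no serious obstacle; the only points requiring care are recognizing the algebraic identity $\frac{1+3\eta}{1-\eta}=1+\frac{4\eta}{1-\eta}$ that makes the definition of $\eta_1$ match the reduced inequality, and confirming $\eta_1<1$ so that the monotonicity reduction is valid. Note that $\eta_0$ itself plays no role here beyond ensuring $\eta\le\eta_1$.
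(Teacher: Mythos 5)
Your proof is correct and is essentially the paper's argument in a slightly different order: both isolate the quantity $\frac{4\eta}{1-\eta}$, bound it by its value at $\eta_1$ via the defining relation $\frac{4\eta_1}{1-\eta_1}=\frac{\gamma_*}{\gamma^*}-\frac12$ (using monotonicity in $\eta$), and close with the condition $\gamma^*<2\gamma_*$ from \textbf{(A6)}. The only cosmetic difference is that you rearrange the target inequality before substituting, whereas the paper substitutes directly and simplifies to $\frac32+\frac{\gamma^*}{4\gamma_*}\leq 2$.
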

\begin{proof}[\textbf{\upshape Proof:}]
  Let $\eta\leq\min(\eta_0,\eta_1)$, then
  \begin{align*}
  \frac{1+\eta}{1-\eta}\frac{\gamma^*}{\gamma_*}+\frac{\gbar}{\gamma_*}
   & =\frac{1-\eta+2\eta}{1-\eta}\frac{\gamma^*}{\gamma_*}+\frac{\gbar}{\gamma_*}                                                     \\
   & =\frac{\gamma*+\gbar}{\gamma_*}+\frac{1}{2}\,\frac{4\eta}{1-\eta}\,\frac{\gamma^*}{\gamma_*}                                     \\
   & \leq \frac{\gamma^*/2+\gamma_*}{\gamma_*}+\frac{1}{2}\left(\frac{\gamma_*}{\gamma^*}-\frac{1}{2}\right)\frac{\gamma^*}{\gamma_*} \\
   & =\frac{3}{2}+\frac{\gamma^*}{4\gamma_*}\leq 2.
\end{align*}
\end{proof}

\begin{lemma}
  \label{lem:techeta0eta1}
  Assume {\bf (A6)} holds. For $r>0$, let
  \begin{equation*}
    \mathcal E(r)=\left\{\exists\pi\in\Pi_M\,\forall i=1,\dotsc,M\ {\mathbb{X}_n}\cap S_{i}\subseteq\cX_{\pi(i)}(r)\right\}.
  \end{equation*}
  Let $\eta\leq \min(\eta_0,\eta_1)$, then under $\mathcal E(r)\cap \Omega_\eta$ we have
  \begin{enumerate}
    \item $\widehat r_n\geq r$ almost surely;
    \item There exists $\pi\in\Pi_M$ such that, $\forall i=1,\dotsc,M$ $\cX_i(r)\subseteq\cX_{\pi(i)}(\widehat r_n)$.
  \end{enumerate}
\end{lemma}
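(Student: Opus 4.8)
The plan is to derive both assertions from (i) a cardinality inequality furnished by Lemmas~\ref{lem:probomegabar} and~\ref{lem:borneetagamma}, and (ii) a monotonicity analysis of $g(\rho):=|\cX_M(\rho)|$. Write $N_i=|\mathbb{X}_n\cap S_{i}|$. Since $\eta\leq\min(\eta_0,\eta_1)$, Lemma~\ref{lem:probomegabar}(ii) gives $N_0<\min_i N_i$, while the two-sided bounds defining $\Omega_\eta$ yield $\max_i N_i/\min_j N_j<\tfrac{1+\eta}{1-\eta}\tfrac{\gamma^*}{\gamma_*}$ and $N_0/\min_j N_j<\gbar/\gamma_*$; Lemma~\ref{lem:borneetagamma} bounds the sum of these two ratios by $2$, so that
\[
  \max_i N_i + N_0 < 2\min_j N_j .
\]
Under $\mathcal E(r)$ the permutation $\pi$ shows that each of the $M$ largest clusters $\cX_1(r),\dots,\cX_M(r)$ contains exactly one support $\mathbb{X}_n\cap S_{i}$; hence every cluster outside the top $M$ is made of outliers only and has size at most $N_0<\min_j N_j$, while each $\cX_m(r)$ has size between $\min_j N_j$ and $\max_i N_i+N_0<2\min_j N_j$. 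In particular $g(r)=\min_m|\cX_m(r)|\geq\min_j N_j$.

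I would then study $g$ as $\rho$ varies. For $\rho<r$ the $r$-partition coarsens the $\rho$-partition, so each $\rho$-cluster lies inside a single $r$-cluster. A $\rho$-cluster of size $\geq\min_j N_j$ cannot lie in an outlier cluster (too small), and two disjoint such $\rho$-clusters cannot lie in the same $\cX_m(r)$ (their total size would exceed $2\min_j N_j>|\cX_m(r)|$); thus each support cluster contains at most one $\rho$-cluster of size $\geq\min_j N_j$. Consequently, if $g(\rho)\geq\min_j N_j$ there are exactly $M$ such clusters, one per support cluster, and the smallest one is contained in the smallest $\cX_m(r)$, giving $g(\rho)\leq g(r)$; otherwise $g(\rho)<\min_j N_j\leq g(r)$ trivially. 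Hence $g(\rho)\leq g(r)$ for all $\rho<r$. Next let $\rho^{\mathrm{m}}$ be the first radius at which two distinct supports become connected: for $\rho\geq\rho^{\mathrm{m}}$ there are at most $M-1$ support clusters, so $\cX_M(\rho)$ is an outlier cluster and $g(\rho)\leq N_0<\min_j N_j\leq g(r)$; whereas on $[r,\rho^{\mathrm{m}})$ the supports stay separated and each support cluster only absorbs outliers, so $g$ is non-decreasing and $\geq g(r)$ there. Therefore $\max_\rho g$ is attained on $[r,\rho^{\mathrm{m}})$, which forces $\widehat r_n\in[r,\rho^{\mathrm{m}})$ and proves assertion~1.

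Assertion~2 then follows from $r\leq\widehat r_n<\rho^{\mathrm{m}}$. Since $\widehat r_n\geq r$, the $\widehat r_n$-partition coarsens the $r$-partition, so each $\cX_i(r)$ is contained in a single connected component at radius $\widehat r_n$; since $\widehat r_n<\rho^{\mathrm{m}}$ the $M$ supports are still pairwise separated, so these $M$ components are distinct and coincide with the top $M$ clusters $\cX_1(\widehat r_n),\dots,\cX_M(\widehat r_n)$, which gives the required permutation $\pi$. The main obstacle is the regime $\rho<r$: fragmentation of a large support at a small radius could a priori create an $M$-th largest cluster exceeding $g(r)$, and excluding this is precisely where the inequality $\max_i N_i+N_0<2\min_j N_j$ --- i.e.\ assumption \textbf{(A6)} via Lemma~\ref{lem:borneetagamma} --- is indispensable, since it limits each support cluster to a single large fragment.
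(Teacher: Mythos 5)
Your proof is correct, and it ultimately rests on the same quantitative engine as the paper's proof --- the bound $N_{(1)}+N_0<2N_{(M)}$ extracted from Lemmas~\ref{lem:probomegabar} and~\ref{lem:borneetagamma} --- but it is organized quite differently. For assertion~1 the paper argues by contradiction: if $\widehat r_n<r$ then the clusters $\mathcal Y_1(\widehat r_n),\dotsc,\mathcal Y_M(\widehat r_n)$ all exceed $|\mathcal Y_M(r)|$ and, by pigeonhole, two of them sit inside a single $\mathcal Y_i(r)$ with $i\le M-1$, forcing $|\mathcal Y_1(r)|>2|\mathcal Y_M(r)|$ and contradicting the ratio bound $\le 2$. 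You instead run a regime analysis of $g(\rho)=|\cX_M(\rho)|$ on $\rho<r$, $[r,\rho^{\mathrm m})$ and $[\rho^{\mathrm m},\infty)$; your step for $\rho<r$ (each $\cX_m(r)$ holds at most one fragment of size $\ge\min_jN_j$) is the same combinatorial idea in a different guise. For assertion~2 the routes genuinely diverge: the paper shows directly that two of $\cX_1(r),\dotsc,\cX_M(r)$ cannot merge into one $\cX_i(\widehat r_n)$, since otherwise some top-$M$ cluster at $\widehat r_n$ would consist of outliers only and have size $\le N_0<N_{(M)}\le|\cX_M(\widehat r_n)|$; you instead exploit the stronger conclusion $\widehat r_n<\rho^{\mathrm m}$ obtained in your first part, so that the supports remain separated at $\widehat r_n$. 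Your version buys a clean monotonicity picture of $g$ and the extra information $\widehat r_n<\rho^{\mathrm m}$, at the price of a longer case analysis. One phrasing to tighten: ``the smallest one is contained in the smallest $\cX_m(r)$'' is not necessarily true; what you need, and what does hold, is that the large fragment lying inside $\cX_M(r)$ has size at most $g(r)$, which already bounds the minimum $g(\rho)$ from above.
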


\begin{proof}[\textbf{\upshape Proof:}]
  Let $\eta\leq \min(\eta_0,\eta_1)$ and assume that $\Omega_\eta$ is true. We first prove that $\widehat r_n\geq r$ with a reductio ad absurdum. Assume that $\widehat r_n< r$. Observe that
  \begin{equation*}
    \left|\mathcal Y_M(\widehat r_n)\right|>\left|\mathcal Y_M(r)\right|,
  \end{equation*}
  by definition of $\widehat r_n$. 
  It follows that
  \begin{equation*}
    \left|\mathcal Y_1(\widehat r_n)\right|\geq \dotsc\geq \left|\mathcal Y_M(\widehat r_n)\right|>\left|\mathcal Y_M(r)\right|.
  \end{equation*}
  Since $\widehat r_n< r$, we deduce that one of the $\mathcal Y_i(r),i=1,\dotsc,M-1$  contains observations of at least two clusters among $\mathcal Y_i(\widehat r_n),i=1,\dotsc,M$. It implies that
  \begin{equation}
    \label{eq:rapportsizeYabsurde}
    \left|\mathcal Y_{1}(r)\right|\geq 2 \left|\mathcal Y_M(\widehat r_n)\right|>2\left|\mathcal Y_M(r)\right|.
  \end{equation}
  Moreover, under $\mathcal E(r)$ we have $N_{(1)}\leq \left|\mathcal Y_1( r)\right|\leq N_{(1)}+N_0$ where $N_i=|\mathbb{X}_n\cap S_{i}|$ and $N_{(i)},i=1,\dotsc,M$ are such that
  \begin{equation*}
    N_{(1)}\geq \dotsc\geq N_{(M)}.
  \end{equation*}
  Thus, under $\mathcal E(r)\cap \Omega_\eta$, we have from Lemma~\ref{lem:probomegabar}
  \begin{equation*}
    \left|\mathcal Y_{1}(r)\right|\leq N_{(1)}+N_0\leq N_{(1)}+\frac{\gbar}{\gamma_*}N_{(M)}.
  \end{equation*}
  Since $\left|\mathcal Y_{(M)}(r)\right|\geq N_{M}$, we obtain from Lemma~\ref{lem:borneetagamma}
  \begin{equation*}
    \frac{\left|\mathcal Y_{1}(r)\right|}{\left|\mathcal Y_{M}(r)\right|}\leq \frac{N_{(1)}}{N_{(M)}}+\frac{\gbar}{\gamma_*}\leq \frac{1+\eta}{1-\eta}\frac{\gamma^*}{\gamma_*}+\frac{\gbar}{\gamma_*}\leq 2,
  \end{equation*}
  which is a contradiction with~\eqref{eq:rapportsizeYabsurde}. We deduce that $\widehat r_n\geq r$ almost surely.

  For the second point, observe that since $\widehat r_n\geq r$, each $\cX_i(\widehat r_n),i=1,\dotsc,M$ may be written as a union of clusters in
  \begin{equation*}
    \cX_1(r),\dotsc,\cX_M(r),\mathcal Y_{M+1}(r),\dotsc,\mathcal Y_{M(r)}(r).
  \end{equation*}
  Moreover, for each $i\in\{1,\dotsc,M\}$ there exists a unique $j\in\{1,\dotsc,M\}$ and a subset $\mathcal T(\widehat r_n)$ of $\{M+1,\dotsc,M(r)\}$ such that
  \begin{equation}
    \label{eq:inclu_r-rcha}
    \cX_i(\widehat r_n)=\cX_j(r)+\bigcup_{\ell\in\mathcal T(\widehat r_n)} \mathcal Y_\ell(r).
  \end{equation}
  Indeed, if there exists $i\in\{1,\dotsc,M\}$ and $1\leq j\neq j'\leq M$ such that
  \begin{equation*}
    \cX_j(r)\cup \cX_{j'}(r)\subseteq \cX_i(\widehat r_n),
  \end{equation*}
  then $\cX_M(\widehat r_n)$ may be written as a union of clusters in
  \begin{equation*}
    \{\mathcal Y_{M+1}(r),\dotsc,\mathcal Y_{M(r)}(r)\},
  \end{equation*}
  and thus $\left|\cX_M(\widehat r_n)\right|\leq N_0$. This is not possible since, by definition of $\widehat r_n$ and by Lemma~\ref{lem:probomegabar}, we have
  \begin{equation*}
    \left|\cX_M(\widehat r_n)\right|\geq \left|\cX_M(r)\right|\geq N_{(M)}> N_0.
  \end{equation*}
  We deduce that~\eqref{eq:inclu_r-rcha}  is true. Therefore, there exists $\pi\in\Pi_M$ such that, $\forall i=1,\dotsc,M$ $\cX_i(r)\subseteq\cX_{\pi(i)}(\widehat r_n)$.
\end{proof}

\subsection{Proof of Theorems}
\begin{proof}[\textbf{\upshape Proof of Theorem~\ref{theo:clustriskhatrn}:}]
  For $r>0$, let
  \begin{equation}
    \label{eq:defmathcal E}
    \mathcal E(r)=\left\{\exists\pi\in\Pi_M,\,\forall i=1,\dotsc,M,\ {\mathbb{X}_n} \cap S_{i}\subseteq\cX_{\pi(i)}(r)\right\}
  \end{equation}
  Let $\eta\leq\min(\eta_0,\eta_1)$. Observe that
  \begin{align*}
    1-\mathcal R_n(\cX(\widehat r_n)) & = \prob\big(\mathcal E(\widehat r_n)\big)                              \\
                                      & \geq \prob\big(\mathcal E(\widehat r_n),\mathcal E(r),\Omega_\eta\big) \\
                                      & =\prob\big(\mathcal E(r),\Omega_\eta\big)
  \end{align*}
  where last line comes from Lemma~\ref{lem:techeta0eta1}. We deduce that
  \begin{align*}
    \mathcal R_n(\cX(\widehat r_n)) & \leq 1-\prob(\Omega_\eta,\mathcal E(r))                   \\
                                    & \leq 1-\prob(\mathcal E(r))+\prob(\overline{\Omega_\eta}) \\
                                    & \leq \mathcal R_n(\cX(r))+\prob(\overline{\Omega_\eta})
  \end{align*}
  and the result follows from Lemma~\ref{lem:probomegabar}.
\end{proof}

\begin{proof}[\textbf{\upshape Proof of Theorem~\ref{thm:cluster-identification}:}]
  First observe that for $r>0$,
  \begin{align}
    \label{eq:minoran1moinsR}
    1-\mathcal R_n(\cX(r)) 
    & =\prob\left(\exists\pi\in\Pi_M\,\forall i=1,\dotsc,M\ {\mathbb{X}_n}\cap S_{i}\subseteq\cX_{\pi(i)}(r)\right) \nonumber       \\
    & \geq \prob\left(\exists\pi\in\Pi_M\,\forall i=1,\dotsc,M\ {\mathbb{X}_n}\cap S_{i}\subseteq\cX_{\pi(i)}(r),\Omega_\eta\right)
  \end{align}
  where $\Omega_\eta$ is the event defined in Lemma~\ref{lem:probomegabar}. Since, under $\Omega_\eta$, $N_0<\min_{i\in\interval{M}}N_i$ the event in~\eqref{eq:minoran1moinsR} equals
  \begin{equation*}
    \left\{
    \begin{array}{l}
      \forall i=1,\dotsc,M,\,{\mathbb{X}_n}\cap S_{i}\text{ are }r\text{-connected}                                                   \\
      \forall i\neq j,\text{ there is no }r\text{-connected path between } \mathbb{X}_n\cap S_{i}\text{ and }\mathbb{X}_n\cap S_{\!j} \\
      \Omega_\eta,
    \end{array}\right.
  \end{equation*}
  which contains (since $0<r<\delta$)
  \begin{equation*}
    \left\{
    \begin{array}{l}
      \forall i=1,\dotsc,M,\,{\mathbb{X}_n}\cap S_{i}\text{ are }r\text{-connected}                           \\
      \text{there is no $r$-connected path in $S_{0}$ with at least $\lfloor \delta/r\rfloor+1$ observations} \\
      \Omega_\eta.
    \end{array}\right.
  \end{equation*}
  We deduce from Lemmas~\ref{lem:within-cluster-connectivity} and~\ref{lem:between-cluster-connectivity} that
  \begin{align*}
    \label{eq:risqueRn3termes}
    \mathcal R_n(\cX(r))\leq & \sum_{i=1}^M\psi_{n,i}(r)+\varphi_n\left(\left\lfloor \frac{\delta}{r}\right\rfloor+1,r\right)+\prob(\overline{\Omega_\eta})                                  \\
    \leq                     & \Lambda r^{-d} \exp(-\constA nr^{d})+n\varepsilon (\constB \varepsilon n r^{\dd})^{\left\lfloor \frac{\delta}{r}\right\rfloor}+\prob(\overline{\Omega_\eta}),
  \end{align*}
  where $\Lambda=\sum_{i=1}^M\Lambda_i$. Result follows from Lemma~\ref{lem:probomegabar}.
\end{proof}

\bibliography{kr}

\end{document}